\newcommand{\mr}{{\mathbb R}}
\newcommand{\mn}{{\mathbb N}}
\newcommand{\mz}{{\mathbb Z}}
\newcommand{\mc}{{\mathbb C}}
\newcommand{\md}{{\mathbb D}}
\newcommand{\eps}{\varepsilon}
\newcommand{\dist}{\operatorname{dist}}
\newcommand{\Real}{\mathbb{R}}
\newcommand{\Integer}{\mathbb{Z}}
\newcommand{\Complex}{\mathbb{C}}
\newcommand{\todo}[1]{{\sffamily To do:}}
\renewcommand{\Im}{\operatorname{Im}}
\renewcommand{\Re}{\operatorname{Re}}
\newtheorem{thm}{Theorem}
\newtheorem*{thm_nonumber}{Theorem}
\newtheorem{cor}[thm]{Corollary}
\newtheorem{lemma}[thm]{Lemma}
\theoremstyle{remark}{
\newtheorem{rem}{Remark}

}
\title{Inequalities for the eigenvalues of non-selfadjoint Jacobi operators}
\author{Marcel Hansmann\footnotemark[1],
Guy Katriel{\footnotemark[1]\;\;\footnotemark[2]\;\;}}
\date{}
\begin{document}

\maketitle
\renewcommand{\thefootnote}{\fnsymbol{footnote}}
\footnotetext[1]{Institute of Mathematics, Technical University of
Clausthal, 38678 Clausthal-Zellerfeld, Germany.}
\footnotetext[2]{Partially supported by the Humboldt Foundation
(Germany).}
\begin{abstract}
We prove Lieb-Thirring-type bounds on eigenvalues of non-selfadjoint Jacobi
operators, which are nearly as strong as those
proven previously for the case of selfadjoint operators by
Hundertmark and Simon.
We use a method based on determinants of operators and on complex function theory, extending
and sharpening earlier work of Borichev, Golinskii and Kupin.
\end{abstract}

\section{Introduction and Results}

This paper is concerned with the study of the set of discrete eigenvalues of complex Jacobi operators $J: l^2(\mz) \to l^2(\mz)$, represented by a two-sided infinite tridiagonal matrix
$$ J= \left(
      \begin{array}{ccccccc}
       \ddots & \ddots & \ddots &  &  &  &  \\
        &  a_{-1} & b_0 & c_0 &  &  &   \\
        &  & a_0 & b_1 & c_1 &  &   \\
        &  &  & a_1 & b_2 & c_2 &  \\
        &  &  &  & \ddots & \ddots & \ddots
      \end{array}
    \right),
$$
where $\{a_k\}_{k \in \mz}, \{b_k\}_{k \in \mz}$ and $ \{c_k\}_{k \in \mz}$ are bounded complex sequences. More precisely, for $u \in l^2(\mz)$, $J$ is defined via
$$(J u)(k)=a_{k-1}u(k-1)+b_k u(k)+c_k u(k+1).$$
We are interested in operators $J$ that are compact perturbations of the free Jacobi operator $J_0$, which is defined as the special case with
$a_k=c_k \equiv 1$ and $b_k\equiv 0$, i.e.
$$(J_0 u)(k)=u(k-1)+ u(k+1).$$

So, in the following, we will always  assume that $J-J_0$ is compact, or equivalently that
$$\lim_{|k|\rightarrow \infty}a_k=\lim_{|k|\rightarrow
\infty}c_k=1,\quad \text{ and } \quad \lim_{|k|\rightarrow \infty}b_k=0.$$
As is well-known, the spectrum $\sigma(J_0)$ of $J_0$ is equal to
$[-2,2]$ and the compactness of $J-J_0$ implies   that
$\sigma(J) = [-2,2]\; \dot{\cup}\;
\sigma_{d}(J)$, where $[-2,2]$ is the essential spectrum of $J$
and the discrete spectrum $\sigma_{d}(J) \subset \mc \setminus [-2,2]$
consists of a countable set of isolated eigenvalues of finite
algebraic multiplicity, with possible accumulation points in
$[-2,2]$.

We define the sequence $d= \{d_k\}_{k \in \mz} $ as follows
\begin{equation}\label{defdk}d_k=\max\Big(|a_{k-1}-1|,|a_{k}-1|,|b_k|,|c_{k-1}-1|,|c_{k}-1|\Big).\end{equation}
Note that the compactness of $J-J_0$ is equivalent to $\{d_k\}$
converging to $0$. The main results of this paper provide
information on $\sigma_d(J)$, assuming the stronger condition that
$d \in l^p(\mz)$, the space of $p$-summable sequences
(we will see below that $d \in l^p(\mz)$ implies that $J-J_0$ is an element of the Schatten class $\mathbf{S}_p$, see Section \ref{prelim} for
relevant definitions).

\begin{thm}\label{thm2}
Let $\tau \in (0,1)$. If $d \in l^p(\mz)$, where $p > 1$, then
\begin{equation}\label{inequality2}
\sum_{\lambda \in \sigma_d(J)}
\frac{\dist(\lambda,[-2,2])^{p+\tau}}{|\lambda^2-4|^{\frac{1}{2}}} \leq
C(p,\tau)\|d\|_{l^p}^p.
\end{equation}
Furthermore, if $d \in l^1(\mz)$, then
\begin{equation}\label{inequality2b}
\sum_{\lambda \in \sigma_d(J)}
\frac{\dist(\lambda,[-2,2])^{1+\tau}}{|\lambda^2-4|^{\frac{1}{2}+\frac {\tau}{4}}} \leq
C(\tau)\|d\|_{l^1}.
\end{equation}
\end{thm}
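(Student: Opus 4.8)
The plan is to realise the eigenvalues of $J$ as the zeros of a single holomorphic function on the unit disk $\md$, to bound the growth of that function near the boundary, and then to invoke a Blaschke-type theorem converting growth into a summability condition on the zeros (a sharpened form of the Borichev--Golinskii--Kupin bound). First I would introduce the conformal map $\lambda(z)=z+z^{-1}$, which sends $\md\setminus\{0\}$ bijectively onto $\mc\setminus[-2,2]$, carries the circle $\{|z|=1\}$ onto the essential spectrum $[-2,2]$, and sends $z=0$ to $\infty$. Since $d\in l^p(\mz)$ implies $J-J_0\in\mathbf{S}_p$ with $\norm{J-J_0}_{\mathbf{S}_p}\le C\norm{d}_{l^p}$, I may form the $m$-regularised perturbation determinant with $m=\lceil p\rceil$,
\[
\delta(\lambda)=\det\nolimits_{m}\!\big(I+(J_0-\lambda)^{-1}(J-J_0)\big),
\]
which is holomorphic on $\mc\setminus[-2,2]$, tends to $1$ as $\lambda\to\infty$, and whose zeros, counted with multiplicity, are exactly $\sigma_d(J)$. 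Pulling back, $h(z):=\delta(\lambda(z))$ extends holomorphically across the removable singularity at $z=0$ with $h(0)=1$, and its zeros $w\in\md\setminus\{0\}$ correspond to the eigenvalues $\lambda(w)\in\sigma_d(J)$.

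Next I would derive the growth bound for $h$. Using the standard estimate for regularised determinants recalled in Section~\ref{prelim}, $\abs{\det\nolimits_m(I+A)}\le\exp(\Gamma_p\norm{A}_{\mathbf{S}_p}^p)$, together with the fact that $J_0$ is self-adjoint so that $\norm{(J_0-\lambda)^{-1}}=\dist(\lambda,[-2,2])^{-1}$, gives
\[
\log\abs{h(z)}\le \Gamma_p\,\norm{J-J_0}_{\mathbf{S}_p}^p\,\dist(\lambda(z),[-2,2])^{-p}\le C\,\norm{d}_{l^p}^p\,\dist(\lambda(z),[-2,2])^{-p}.
\]
Two elementary identities for the Joukowski map convert this to the disk: one has exactly $\abs{\lambda(z)^2-4}^{1/2}=\abs{1-z^2}/\abs{z}$, and a short computation yields the two-sided estimate $\dist(\lambda(z),[-2,2])\asymp(1-\abs{z})\,\abs{1-z^2}/\abs{z}$. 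Hence
\[
\log\abs{h(z)}\le \frac{C\,\norm{d}_{l^p}^p}{(1-\abs{z})^{p}\,\abs{1-z}^{p}\,\abs{1+z}^{p}},
\]
i.e. $h$ has radial growth of order $p$ and singularities of order $p$ at the two band-edge preimages $\xi=\pm1$.

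The heart of the argument is then a zero-distribution theorem of the following shape: if $h$ is holomorphic on $\md$ with $h(0)=1$ and $\log\abs{h(z)}\le K(1-\abs{z})^{-\alpha}\prod_j\abs{z-\xi_j}^{-\gamma_j}$, then for every $\tau\in(0,1)$
\[
\sum_{h(w)=0}(1-\abs{w})^{\alpha+\tau}\prod_j\abs{w-\xi_j}^{\gamma_j-1+\tau}\le C(\alpha,\gamma,\tau)\,K .
\]
Applied with $\alpha=\gamma_{\pm1}=p$ this gives $\sum_w(1-\abs{w})^{p+\tau}\abs{1-w}^{p-1+\tau}\abs{1+w}^{p-1+\tau}\le C\norm{d}_{l^p}^p$. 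Finally I would invert the change of variables. Because $J$ is bounded, $\sigma_d(J)$ is bounded and the preimages satisfy $c\le\abs{w}<1$, so powers of $\abs{w}$ are harmless constants; the same two identities then turn $(1-\abs{w})^{p+\tau}\abs{1-w^2}^{p-1+\tau}$ back into $\dist(\lambda,[-2,2])^{p+\tau}\abs{\lambda^2-4}^{-1/2}$, which is precisely the summand of \eqref{inequality2}. For \eqref{inequality2b} the growth exponents $\alpha=\gamma_{\pm1}=1$ are critical for the zero-distribution theorem, so the clean application above degrades; instead I would use the endpoint form of the theorem with the edge exponent relaxed to $\tau/2$, which on inverting the map produces the modified weight $\abs{\lambda^2-4}^{1/2+\tau/4}$.

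The main obstacle is the zero-distribution theorem itself: the improvement of its radial exponent from the Borichev--Golinskii--Kupin value $\alpha+1+\tau$ down to $\alpha+\tau$ is exactly what makes the resulting Lieb--Thirring exponents as strong as stated, and proving it --- presumably by integrating $\log\abs{h}$ against a carefully chosen family of measures adapted to the singular points $\xi_j$ and optimising --- is where the real work lies. A secondary difficulty is the endpoint $p=1$, where the band-edge exponents are critical and force the weaker edge weight in \eqref{inequality2b}.
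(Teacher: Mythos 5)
Your scaffolding (perturbation determinant, Joukowski map $z\mapsto z+z^{-1}$, Blaschke-type zero theorem) is the same as the paper's, but the step you yourself identify as "where the real work lies" is not just hard --- it is false. You bound $\log|h|$ by the crude estimate $\|(\lambda-J_0)^{-1}(J-J_0)\|_{\mathbf{S}_p}^p\leq \|J-J_0\|_{\mathbf{S}_p}^p\,\dist(\lambda,[-2,2])^{-p}$, which yields the radial exponent $\alpha=p$, and then try to compensate by postulating a zero-distribution theorem with conclusion $\sum_w (1-|w|)^{\alpha+\tau}\prod_j|w-\xi_j|^{\gamma_j-1+\tau}\leq CK$, i.e.\ with radial exponent $\alpha+\tau$ instead of the Borichev--Golinskii--Kupin exponent $\alpha+1+\tau$. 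No such improvement can hold. Already for $\alpha=0$ and no boundary singularities the statement would read: $\log|h|\leq K$ and $h(0)=1$ imply $\sum_w(1-|w|)^{\tau}\leq CK$ with $\tau<1$. Jensen's formula gives only $\sum_w(1-|w|)\leq\sum_w\log\frac{1}{|w|}\leq K$, and this is sharp: take $h(z)=b_w(z)/|w|$ where $b_w$ is the Blaschke factor vanishing at $w$, so that $h(0)=1$ and $K\asymp\log\frac 1{|w|}\asymp 1-|w|$, while $(1-|w|)^{\tau}/K\to\infty$ as $|w|\to 1$. The same one-zero example, multiplied into functions with the required boundary singularities, kills the general statement. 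So the "sharpened" complex-analysis input cannot carry the loss you incurred in the operator estimate.

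The paper closes exactly this gap on the operator side, not the complex-analysis side. It keeps the BGK radial exponent $\alpha+1+\tau$ (Theorem \ref{thm:c1}, refined in Theorem \ref{thm:c2} only by the behaviour at $z=0$) and instead improves the bound on $\log|g|$ by one full power of $\dist$: writing $J-J_0=D^{\frac 12}UD^{\frac 12}$ with $D=\mathrm{diag}(d_k)$, $\|U\|\leq 3$, and using cyclicity of the regularized determinant, one gets $g(\lambda)={\det}_{\lceil p\rceil}(I-G(\lambda)U)$ with $G(\lambda)=D^{\frac 12}(\lambda-J_0)^{-1}D^{\frac 12}$, and then estimates $\|G(\lambda)\|_{\mathbf{S}_p}$ --- not by operator-norm bounds, but via the Fourier representation of $(\lambda-J_0)^{-1}$, a Kato--Seiler--Simon-type lemma, and an explicit computation of $\|(\lambda-2\cos\theta)^{-1}\|_{L^p}$. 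This yields
\begin{equation*}
\log|g(\lambda)|\leq \frac{C(p)\|d\|_{l^p}^p}{\dist(\lambda,[-2,2])^{p-1}\,|\lambda^2-4|^{\frac 12}},
\end{equation*}
i.e.\ $\alpha=p-1$, $\beta=\tfrac 12$, so that BGK's $\alpha+1+\tau$ gives precisely $p+\tau$ together with the factor $|\lambda^2-4|^{-\frac 12}$. Two further points your sketch misses: (i) your argument that the zeros satisfy $|w|\geq c$ makes $c$, hence the final constant, depend on $\|J\|$, whereas the theorem's constant is $C(p,\tau)$ only --- this is why the paper proves Theorem \ref{thm:c2} with the $|z|^{\gamma}$ hypothesis and the $|z|^{-(\gamma-1+\tau)_+}$ term in the conclusion; and (ii) the $p=1$ case \eqref{inequality2b} is obtained not by a modified endpoint zero theorem but by an $\eps$-loss in the operator estimate ($\|G(\lambda)\|_{\mathbf{S}_1}\leq C(\eps)\|d\|_{l^1}\dist^{-\eps}|\lambda^2-4|^{-(1-\eps)/2}$, via H\"older applied to $\|v_\lambda\|_{L^1}$), followed by the same corollary with $\eps=\tilde\tau=\tau/2$.
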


\begin{rem}
In the summation above, and elsewhere in this article, each eigenvalue is counted according to its algebraic multiplicity. Furthermore, the constants used in this paper are generic, i.e. the value of a constant may change from line to line. However, we will always indicate the parameters that a constant depends on.
\end{rem}

The above estimates can be regarded as
`near-generalisations' of the following Lieb-Thirring inequalities proven by
Hundertmark and Simon \cite{hundertsimon} for selfadjoint Jacobi operators (i.e. for the
case that $a_k,b_k,c_k \in \mr$ and $a_k=c_k$ for all $k$).

\begin{thm_nonumber}[\cite{hundertsimon}, Theorem 2]
Let $J$ be selfadjoint and suppose that $d \in l^p(\mz)$, where $p\geq 1$. Then
\begin{equation}\label{hs2}
  \sum_{\lambda \in \sigma_d(J),\; \lambda < -2} |\lambda + 2|^{p-\frac 1 2} +  \sum_{\lambda \in \sigma_d(J),\; \lambda>2} |\lambda - 2|^{p- \frac 1 2} \leq C(p)\|d\|_{l^p}^p.
\end{equation}
\end{thm_nonumber}


To see the relation between the above result and Theorem \ref{thm2}, we note that in the selfadjoint case the eigenvalues of $J$ are in
$\Real\setminus [-2,2]$, and we have
$\dist(\lambda,[-2,2])=|\lambda- 2|$ for $\lambda>2$ and
$\dist(\lambda,[-2,2])=|\lambda+2|$ if $\lambda<-2$, so that
(\ref{hs2}) can be rewritten in the form
\begin{equation}\label{hs3}
\sum_{\lambda \in \sigma_d(J)} \dist(\lambda,[-2,2])^{p-\frac 1 2} \leq C(p) \|d\|_{l^p}^p.
\end{equation}
One could try to generalise (\ref{hs3}) to non-selfadjoint Jacobi
operators, but we are not able to do this, and in fact we conjecture
that (\ref{hs3}) is not true in the non-selfadjoint case, due to
a different behaviour of sequences of eigenvalues when converging to
$\pm2$ or to $(-2,2)$, respectively. To get an analogue of (\ref{hs2})
which is valid in the non-selfadjoint case, we note that since
\begin{eqnarray*}
\frac{\dist(\lambda,[-2,2])^{p}}{|\lambda^2-4|^{\frac{1}{2}}} \leq \frac 1 2 \left\{
    \begin{array}{cl}
      |\lambda-2|^{p-\frac 1 2}, \quad & \lambda > 2 \\[4pt]
      |\lambda+2|^{p-\frac 1 2}, \quad & \lambda < -2,
    \end{array}\right.
\end{eqnarray*}
inequality (\ref{hs2}) implies that in the selfadjoint case
  \begin{eqnarray}\label{rt}&&\sum_{\lambda \in \sigma_d(J)}
\frac{\dist(\lambda,[-2,2])^{p}}{|\lambda^2-4|^{\frac 1 2}} \leq   C(p)\|d\|_{l^p}^p.
\end{eqnarray}
Clearly, for $p>1$, (\ref{rt}) is the same as
(\ref{inequality2}) when $\tau=0$. On the other hand, Theorem
\ref{thm2} requires $\tau
> 0$, so that (\ref{inequality2}) (when applied to selfadjoint
operators) is slightly weaker than (\ref{rt}), which is why we say
that it is a `near-generalisation'. The same observation applies to
inequality (\ref{inequality2b}) in case that $p=1$. An interesting
open question is whether (\ref{inequality2}) and
(\ref{inequality2b}) are valid for $\tau=0$ in the non-selfadjoint
case.

We note that the methods used by Hundertmark and Simon in
\cite{hundertsimon} depend in an essential way on the
selfadjointness of the operators, and so are completely different
from those used here. Our approach develops and sharpens ideas of
Borichev, Golinskii and Kupin \cite{Borichev08}. Using determinants
of Schatten class operators, one defines a function $g(\lambda)$
whose zeros coincide with the eigenvalues of $J$, and then these
zeros are studied by applying complex function theory. Other
applications of this approach can be found in \cite{dk06,dhk}.

In \cite{Borichev08}, Theorem 2.3, the authors used this approach to show that for $p > 1$ and $\tau > 0$,
\begin{equation}\label{kupin}
\sum_{\lambda \in \sigma_{d}(J)}
\frac{\dist(\lambda,[-2,2])^{p+1+\tau}}{|\lambda^2-4|} \leq
C(\tau,\|J-J_0\|,p) \|J-J_0\|_{\mathbf{S}_p}^p,
\end{equation}
where $\|.\|_{\mathbf{S}_p}$ denotes the $p$-th Schatten norm. We
note that the Schatten norm $\|J-J_0\|_{\mathbf{S}_p}$ is equivalent
to $\|d\|_{l^p}$, as is shown in Lemma \ref{esti1} below. Inequality
(\ref{kupin}) was originally derived for Jacobi operators on
$l^2(\mn)$ but it carries over to the whole-line case. The authors
of \cite{Borichev08} also derived a more refined estimate in case
$p=1$, similar to (\ref{inequality2b}), but here their proof seems
to use special properties of the half-line
 operator and is thus not directly transferable to the whole-line setting.

We remark that inequality (\ref{kupin}) is an easy consequence of Theorem \ref{thm2} since
$$\frac{\dist(\lambda,[-2,2])^{p+1+\tau}}{|\lambda^2-4|} \leq \frac{\dist(\lambda,[-2,2])^{p+\tau}}{|\lambda^2-4|^{\frac{1}{2}}},$$
as a direct calculation shows. Theorem \ref{thm2} improves upon (\ref{kupin}) in another respect since the constants on the right-hand side of (\ref{inequality2}) and (\ref{inequality2b}) are independent of $J$.
To get rid of this dependence, we develop, in Theorem
\ref{thm:c2} below, a variant of the complex function result used in
\cite{Borichev08}. The proof of Theorem \ref{thm2} further depends on a
more subtle estimate on the function $g(\lambda)$ (mentioned above) than
the one used in \cite{Borichev08}, exploiting the
structure of the Jacobi operators in an essential way.

In relation to Theorem \ref{thm2}, it is interesting to discuss
another approach to generalising inequality (\ref{hs2}) to
non-selfadjoint operators, developed by
Golinskii and Kupin \cite{golinskii}. For $\theta \in [0,\frac{\pi} 2)$ we define the following sectors in the complex plane:
\begin{equation*}
\Omega_\theta^{\pm}= \{ \lambda : 2 \mp \Re(\lambda) < \tan(\theta) |\Im \lambda| \}.
\end{equation*}
\begin{thm_nonumber}[\cite{golinskii}, Theorem 1.5]
Let $\theta \in [0,\frac{\pi} 2)$.
Then for $p \geq \frac 3 2$
\begin{eqnarray}
\sum_{\lambda\in \sigma_{d}(J) \cap \Omega_\theta^+} |\lambda-2|^{p -
\frac 1 2} + \sum_{\lambda\in \sigma_{d}(J) \cap \Omega_\theta^-}
|\lambda+2|^{p - \frac 1 2} \leq C(p,\theta) \|d\|_{l^{p}}^{p}, \label{gk1}
\end{eqnarray}
where $C(p,\theta)=C(p)(1+2\tan(\theta))^p$.
\end{thm_nonumber}

Clearly, this theorem, when restricted to the selfadjoint case,
gives (\ref{hs2}). This is not a coincidence, but due to the fact
that its proof is obtained by a reduction to the case of selfadjoint
operators and employing (\ref{hs2}). A drawback of this result is
that the sum is not over all eigenvalues since it excludes a
diamond-shaped region around the interval $[-2,2]$ (thus avoiding sequences of eigenvalues converging to some point in $(-2,2)$).
However, we shall show that by a suitable integration, the inequalities (\ref{gk1}) can be used to derive an inequality where the sum is over all the
eigenvalues:

\begin{thm}\label{thm4}
Let $\tau \in (0,1)$. If $d \in l^p(\mz)$, where $p\geq \frac 3 2$, then
\begin{equation}\label{res2:thm4}
\sum_{\lambda \in \sigma_d(J)}
\frac{\dist(\lambda,[-2,2])^{p+\tau}}{|\lambda^2-4|^{\frac{1}{2}+\tau}} \leq
C(p,\tau)\|d\|_{l^p}^p.
\end{equation}
\end{thm}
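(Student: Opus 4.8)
The plan is to integrate the Golinskii--Kupin bounds (\ref{gk1}) over the family of sectors against a carefully chosen measure in the aperture parameter $\theta$, and then interchange summation and integration. Since the weight in (\ref{res2:thm4}) is invariant under $\lambda \mapsto -\lambda$, while $\Omega_\theta^+$ and $\Omega_\theta^-$ are interchanged by this reflection, I would split the sum into the parts over $\{\Re\lambda \ge 0\}$ and $\{\Re\lambda < 0\}$ and treat them by structurally identical arguments, using the sectors $\Omega_\theta^+$ for the former and $\Omega_\theta^-$ for the latter; I describe only the first.

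The geometric heart of the argument is the identity $\dist(\lambda,[-2,2]) = |\lambda-2|\cos\theta_+(\lambda)$, with $\tan\theta_+(\lambda) = (2-\Re\lambda)/|\Im\lambda|$, valid for $0 \le \Re\lambda \le 2$; here $\theta_+(\lambda)\in[0,\pi/2)$ is the critical aperture, so that $\lambda\in\Omega_\theta^+$ exactly when $\theta>\theta_+(\lambda)$ (while for $\Re\lambda>2$ one has $\lambda\in\Omega_\theta^+$ for all $\theta$ and $\dist(\lambda,[-2,2])=|\lambda-2|$). I would then choose on $[0,\pi/2)$ the measure $d\rho(\theta)=(p+\tau)\cos^{p+\tau-1}\theta\,\sin\theta\,d\theta$, whose tail is $\rho([\theta,\pi/2))=\cos^{p+\tau}\theta$. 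Using $|\lambda+2|\ge2$ on $\{\Re\lambda\ge0\}$ to discard that factor, a short check of the two cases $\Re\lambda>2$ and $0\le\Re\lambda\le2$ yields the pointwise bound
\[ \frac{\dist(\lambda,[-2,2])^{p+\tau}}{|\lambda^2-4|^{\frac12+\tau}} \;\le\; 2^{-\frac12-\tau}\,|\lambda-2|^{p-\frac12}\int_0^{\pi/2}\mathbf{1}_{\Omega_\theta^+}(\lambda)\,d\rho(\theta), \]
the crucial point being that $\int_0^{\pi/2}\mathbf{1}_{\Omega_\theta^+}(\lambda)\,d\rho(\theta)=\rho([\theta_+(\lambda),\pi/2))=\cos^{p+\tau}\theta_+(\lambda)=\big(\dist(\lambda,[-2,2])/|\lambda-2|\big)^{p+\tau}$ regenerates precisely the missing weight.

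With this pointwise estimate in hand I would sum over $\{\Re\lambda\ge0\}$, interchange the (nonnegative) sum and integral by Tonelli, enlarge the inner sum to all of $\Omega_\theta^+$, and apply (\ref{gk1}):
\[ \sum_{\substack{\lambda\in\sigma_d(J)\\ \Re\lambda\ge0}} \frac{\dist(\lambda,[-2,2])^{p+\tau}}{|\lambda^2-4|^{\frac12+\tau}} \le 2^{-\frac12-\tau}\int_0^{\pi/2}\Big(\sum_{\lambda\in\Omega_\theta^+}|\lambda-2|^{p-\frac12}\Big)\,d\rho(\theta) \le C(p)\,\|d\|_{l^p}^p\int_0^{\pi/2}(1+2\tan\theta)^p\,d\rho(\theta). \]
It remains to verify finiteness of the last integral: near $\theta=\pi/2$ the integrand behaves like $\cos^{\tau-1}\theta$, so convergence holds exactly because $\tau>0$, producing a constant $C(p,\tau)$. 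Adding the symmetric estimate over $\{\Re\lambda<0\}$, obtained with $\Omega_\theta^-$ and the factor $|\lambda+2|$, completes the proof; the hypothesis $p\ge\frac32$ enters only through (\ref{gk1}), the sole input restricting $p$.

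The main obstacle here is not technical but conceptual: finding the measure $d\rho$ whose tail equals $\cos^{p+\tau}\theta$, which simultaneously (i) reproduces the weight $(\dist(\lambda,[-2,2])/|\lambda-2|)^{p+\tau}$ via the sector indicator and (ii) keeps $\int_0^{\pi/2}(1+2\tan\theta)^p\,d\rho(\theta)$ finite. The tension between these two requirements is exactly what forces the loss of a positive $\tau$ and explains why the endpoint $\tau=0$ is beyond the reach of this method.
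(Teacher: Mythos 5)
Your proposal is correct and takes essentially the same route as the paper: the paper's own proof of Theorem \ref{thm4} is likewise a weighted integration of the sector bounds (\ref{gk1}) over the aperture parameter, with the weight $\sim\cos^{p+\tau-1}\theta$ near $\theta=\pi/2$ regenerating the factor $\bigl(\dist(\lambda,[-2,2])/|\lambda\mp2|\bigr)^{p+\tau}$ and with $\tau>0$ needed for convergence against $(1+2\tan\theta)^p$, exactly as in your argument. The only difference is organizational: the paper splits $\{\Re(\lambda)>0\}$ into $\Psi_1\subset\Omega_{\pi/4}^+$ (handled by one application of (\ref{gk1}) at $\theta=\pi/4$) and its complement $\Psi_2$ (handled by integrating (\ref{gk1}) against $x^{-p-1-\tau}\,dx$ with $x=\tan\theta\in[1,\infty)$), whereas your exact tail identity $\rho([\theta_+(\lambda),\pi/2))=\cos^{p+\tau}\theta_+(\lambda)$ treats both regions in a single unified integral.
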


We emphasise that the proof of Theorem \ref{thm4} does not involve any complex analysis and is thus completely different from the proof of Theorem \ref{thm2}. Let us note the similarities, and the differences, between these two results:
inequality (\ref{res2:thm4}) is in fact somewhat stronger than
(\ref{inequality2}), because of the $\tau$ in the denominator of
(\ref{res2:thm4}). However, while Theorem \ref{thm4} requires the
condition $p\geq \frac 3 2$, Theorem \ref{thm2} is valid for $p\geq
1$, just like the corresponding inequality (\ref{hs2}) in the
selfadjoint case.

We can thus conclude that the approach based on complex analysis
provides very satisfying results for non-selfadjoint Jacobi
operators, which are almost as strong as those obtained in the
selfadjoint case by specialised methods relying on the selfadjointness of the operators.

Let us give a short overview of the contents of this paper: in Section \ref{prelim}, we gather information about Schatten classes and infinite determinants. In Section \ref{complex} we present some complex analysis results that are used in the proof of Theorem \ref{thm2}, which is provided in Section \ref{proofs}. In the final Section \ref{final} we are concerned with the proof of Theorem \ref{thm4}.

\section{Preliminaries}\label{prelim}
For a Hilbert space $\mathcal{H}$ let
$\mathbf{C}(\mathcal{H})$ and $\mathbf{B}(\mathcal{H})$ denote the
classes of closed and of bounded linear operators on $\mathcal{H}$,
respectively. We denote the ideal of all compact operators on
$\mathcal{H}$ by $\mathbf{S}_\infty$ and the ideal of all Schatten
class operators by $\mathbf{S}_p, p > 0$, i.e. a compact operator $C
\in \mathbf{S}_p$ if
\begin{equation*}
  \| C\|_{\mathbf{S}_p}^p = \sum_{n=1}^\infty \mu_n(C)^p < \infty,
\end{equation*}
where $\mu_n(C)$ denotes the $n$-th singular value of $C$.

Schatten class operators obey the following H\"older's inequality:
let $p,p_1,p_2$ be positive numbers with $\frac 1 {p} = \frac 1
{p_1} + \frac 1 {p_2}$. If $C_i \in \mathbf{S}_{p_i} (i=1,2)$, then
the operator $C=C_1C_2 \in \mathbf{S}_p$ and
\begin{equation}\label{holder}
  \|C\|_{\mathbf{S}_p} \leq \| C_1\|_{\mathbf{S}_{p_1}} \| C_2\|_{\mathbf{S}_{p_2}},
\end{equation}
(see e.g. Simon \cite{simonb}, Theorem 2.8).

Let $C
\in \mathbf{S}_n$, where $n\in \mn$. Then one can define the (regularized)
determinant
$${\det}_n(I-C)=\prod_{\lambda \in \:\sigma(C)} \left[(1-\lambda) \exp \left( \sum_{j=1}^{n-1} \frac{\lambda^j}{j} \right)\right],$$
having the following properties (see e.g. Dunford and Schwartz
\cite{Dunford}, Gohberg and Kre{\u\i}n \cite{Gohberg}, or Simon
\cite{simonb}):

\noindent 1. $I-C$ is invertible if and only if $\det_n(I-C)\neq 0$.

\noindent 2. $\det_n(I)=1$.

\noindent 3.  For $A,B \in
\mathbf{B}(\mathcal{H})$ with $AB, BA \in \mathbf{S}_n$:
\begin{equation}\label{eq:comm}
  {\det}_n(I-AB)={\det}_n(I-BA).
\end{equation}

\noindent 4. If $C(\lambda)\in \mathbf{S}_n$ depends holomorphically
on $\lambda \in \Omega$, where $\Omega\subset \mc$ is open, then
$\det_n(I-C(\lambda))$ is holomorphic on $\Omega$.

\noindent 5. If $C\in \mathbf{S}_p$ for some $p>0$, then $C\in
\mathbf{S}_{\lceil p \rceil}$, where
\[ \lceil p \rceil = \min \{ n \in \mn :  n \geq p \},\]
and the following inequality holds,
\begin{equation}\label{inequality}
|{\det}_{\lceil p \rceil}(I-C)|\leq \exp \left( \Gamma_p \|C
\|_{\mathbf{S}_p}^p\right),
\end{equation}
where $\Gamma_p $ is some positive constant, see \cite[page
1106]{Dunford}. We remark that $\Gamma_p= \frac{1}{p}$ for $p \leq
1$, $\Gamma_2=\frac 1 2$ and $\Gamma_p \leq e(2+ \log p)$ in
general, see Simon \cite{Simon77}.

For $A,B \in \mathbf{B}(\mathcal{H})$ with $B-A \in \mathbf{S}_p$,
the $\lceil p \rceil$-regularized perturbation determinant of $A$ by
$B-A$ is a well defined holomorphic function on $\rho(A)=\mc \setminus
\sigma(A)$, given by
$$ d(\lambda)= {\det}_{\lceil p \rceil}(I-(\lambda-A)^{-1}(B-A)).$$
Furthermore, $\lambda_0 \in \rho(A)$ is an eigenvalue of $B$ of
algebraic multiplicity $k_0$ if and only if $\lambda_0$ is a zero of
$d(\cdot)$ of the same multiplicity.

\section{Complex analysis}\label{complex}
Let $\md= \{ z \in \mc : |z| < 1$\}. The following result is due to Borichev, Golinskii and Kupin \cite{Borichev08}.
\begin{thm}\label{thm:c1}
Let $h: \md \to \mc$ be holomorphic with $h(0)=1$, and suppose that
$$\log|h(z)|\leq K\frac{1}{(1-|z|)^{\alpha}} \prod_{j=1}^N \frac{1}{|z-\xi_j|^{\beta_j}},$$
where $|\xi_j|=1$ ($1\leq j\leq N$), and the exponents
$\alpha,\beta_j$ are nonnegative. Let $\tau>0$. Then the zeros of $h$
satisfy the inequality
\begin{equation*}
\sum_{z \in \md, h(z)=0}(1-|z|)^{\alpha+1+\tau}\prod_{j=1}^N|z-\xi_j|^{(\beta_j-1+\tau)_+}\leq
C(\alpha,\{ \beta_j\},\{\xi_j\}, \tau)K,\end{equation*}
where $x_+=\max(x,0)$, and each zero of $h$ is counted according to its multiplicity.
\end{thm}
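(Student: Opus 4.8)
The plan is to exploit that $u := \log|h|$ is subharmonic on $\md$ with $u(0)=\log|h(0)|=0$, and that its Riesz measure $\mu = \tfrac{1}{2\pi}\Delta u = \sum_{h(z_k)=0}\delta_{z_k}$ is exactly the zero-counting measure (with multiplicity). Thus the left-hand side of the claim is the integral of a prescribed weight against $\mu$, and the growth hypothesis supplies the pointwise majorant $u(z)\le F(z):=K(1-|z|)^{-\alpha}\prod_{j=1}^N|z-\xi_j|^{-\beta_j}$. The whole game is to convert this majorant into a weighted bound on $\mu$.

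First I would set up a generalized Jensen identity. For $0<r<1$, Jensen's formula gives
\begin{equation*}
\frac{1}{2\pi}\int_0^{2\pi} u(re^{i\theta})\,d\theta = \int_{|w|<r}\log\frac{r}{|w|}\,d\mu(w).
\end{equation*}
Integrating both sides against a nonnegative radial measure $d\rho(r)$ on $(0,1)$ and applying Tonelli produces $\int_\md \Psi(|w|)\,d\mu(w)$ on the right, where $\Psi(s)=\int_s^1\log\frac{r}{s}\,d\rho(r)\ge 0$, while the left side is at most $\int_0^1\bigl(\frac{1}{2\pi}\int_0^{2\pi} F(re^{i\theta})\,d\theta\bigr)d\rho(r)$ since $u\le F$. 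Choosing $\rho$ with density of order $(1-r)^{\alpha-1+\tau}$ makes $\Psi(s)\asymp(1-s)^{\alpha+1+\tau}$, which yields precisely the radial factor in the claim. The associated majorant integral then involves $(1-r)^{-\alpha}$ multiplied by the weight of order $(1-r)^{\alpha+\tau-1}$, i.e. an integrand of order $(1-r)^{\tau-1}$, which is integrable \emph{if and only if} $\tau>0$. This is exactly where the loss of $\tau$ in Theorem~\ref{thm:c1} originates.

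The main obstacle is that purely radial averaging is blind to the angular factors $\prod_j|z-\xi_j|^{(\beta_j-1+\tau)_+}$: for $\beta_j>1$ the circle average of $|z-\xi_j|^{-\beta_j}$ already blows up like $(1-r)^{1-\beta_j}$, so the radial argument alone cannot reproduce the sharp exponent. To extract these factors I would localize near each boundary singularity $\xi_j$, where $F$ is dominated by $K'|z-\xi_j|^{-\beta_j}$, and rerun the Jensen-type integration after precomposing $h$ with a Möbius automorphism of $\md$ carrying $\xi_j$ to a fixed reference point; in the new coordinate the angular decay toward $\xi_j$ becomes radial decay and is captured by a suitable $\rho$, with the positive part $(\beta_j-1+\tau)_+$ and the shift by $\tau$ arising from the same integrability threshold as above (so that $|z-\xi_j|^{-\beta_j}$ pairs against an integrable test weight). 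The delicate points, which I expect to be the hardest part, are combining the several localized estimates with the global $(1-|z|)$ weight via a partition/interpolation argument, controlling the interaction between distinct singularities $\xi_j$, and verifying that the resulting constant depends only on $\alpha,\{\beta_j\},\{\xi_j\},\tau$ and not on $h$ itself.
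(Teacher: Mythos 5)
First, a point of orientation: the paper does not prove Theorem \ref{thm:c1} at all — it is quoted from Borichev, Golinskii and Kupin \cite{Borichev08} and used as a black box (the paper only proves its variant, Theorem \ref{thm:c2}, by reducing it to Theorem \ref{thm:c1} plus a zero-counting lemma). So your attempt has to be judged against the proof in \cite{Borichev08}, which is a genuinely local, multi-scale argument near each $\xi_j$. Your purely radial first step is correct and nicely executed: integrating Jensen's identity against $(1-r)^{\alpha-1+\tau}\,dr$ does produce a weight $\Psi(s)\asymp(1-s)^{\alpha+1+\tau}$, and it correctly isolates where $\tau>0$ enters. But look at what it actually proves. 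Writing $A(r)$ for the circular average of $\prod_j|re^{i\theta}-\xi_j|^{-\beta_j}$, your majorant integral is $K\int_0^1(1-r)^{\tau-1}A(r)\,dr$, and since $A(r)\asymp\max_j(1-r)^{\min(0,1-\beta_j)}$ (with a logarithm when some $\beta_j=1$), this converges only when every $\beta_j<1+\tau$. In that regime the compensating factors $|z-\xi_j|^{(\beta_j-1+\tau)_+}$ are not even needed, so your radial argument proves a (stronger) statement in an easy regime; the entire content of the theorem — large $\beta_j$, where zeros may accumulate at $\xi_j$ much faster — is delegated to your localization step.

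That step contains the genuine gap. Precomposing $h$ with a Möbius automorphism $\phi$ of $\md$ destroys the only lower-bound information you possess, namely $h(0)=1$: the recentered function takes the value $h(\phi(0))$ at the origin, and the hypothesis gives an \emph{upper} bound on $\log|h|$ only — no lower bound anywhere (indeed $\phi(0)$ could be a zero of $h$). Since Jensen's formula expresses the weighted zero count as the circular average minus $\log|h|$ at the center, an upper bound alone yields no control of the zeros once the center is moved. (Separately, an automorphism maps $\partial\md$ onto $\partial\md$ and merely redistributes $|z-\xi_j|$ by a conformal factor; it does not convert angular decay at $\xi_j$ into radial decay.) This lower-bound obstruction is precisely what makes the theorem hard, and it is what forces the proof in \cite{Borichev08} into a construction in which all local Jensen-type estimates stay anchored to the normalization at the origin (equivalently, Green's functions of subdomains containing $0$), with the exponents $(\beta_j-1+\tau)_+$ emerging from summation of these local bounds over dyadic scales near each $\xi_j$. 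Any repair of your sketch needs either that device or a Cartan/minimum-modulus type theorem supplying lower bounds for $\log|h|$ at well-chosen recentering points; your closing list of ``delicate points'' is in fact this entire missing argument. As written, the proposal establishes the theorem only when $\max_j\beta_j<1+\tau$.
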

For the holomorphic function $h$ that we will consider below,  there will be some additional information
on the speed of convergence of $ \log|h(z)| \to 0$ as $z \to 0$. The following modification of the above theorem takes this
into account.
\begin{thm}\label{thm:c2}
Let $h : \md \to \mc$ be holomorphic with $h(0)=1$, and suppose that
\begin{equation}\label{eq:c2}
  \log |h(z)| \leq K\frac{|z|^\gamma}{(1-|z|)^{\alpha}} \prod_{j=1}^N \frac{1}{|z-\xi_j|^{\beta_j}},
\end{equation}
where $|\xi_j|=1 \;(1 \leq j \leq N)$, and the exponents $\alpha, \beta_j$ and $\gamma$ are nonnegative.
Let $0 < \tau < 1$. Then the zeros of $h$ satisfy the inequality
\begin{small}
\begin{equation}\label{eq:c3}
\sum_{z\in \md, h(z)=0}
\frac{(1-|z|)^{\alpha+1+\tau}}{|z|^{(\gamma-1+\tau)_+}}\prod_{j=1}^N|z-\xi_j|^{(\beta_j-1+\tau)_+} \leq
C(\alpha,\{\beta_j\},\gamma, \{\xi_j\},\tau)K,
\end{equation}
\end{small}
where each zero is counted according to its multiplicity.
\end{thm}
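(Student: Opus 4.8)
The plan is to derive Theorem~\ref{thm:c2} from Theorem~\ref{thm:c1} by splitting the zeros of $h$ according to their distance from the origin. Write $s=(\gamma-1+\tau)_+$ and $t_j=(\beta_j-1+\tau)_+$, and let $\{z_k\}$ denote the zeros of $h$, counted with multiplicity. I would split the left-hand side of \eqref{eq:c3} into the contribution of the \emph{far} zeros, those with $|z_k|>\tfrac12$, and the \emph{near} zeros, those with $|z_k|\le\tfrac12$, and bound each contribution separately by $CK$. The far zeros are governed entirely by Theorem~\ref{thm:c1}, while the near zeros require the extra information carried by the factor $|z|^\gamma$.

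For the far zeros the hypothesis \eqref{eq:c2} already implies the hypothesis of Theorem~\ref{thm:c1} with the same constant $K$, since $|z|^\gamma\le 1$ on $\md$. Moreover $|z_k|^{-s}\le 2^{s}$ when $|z_k|>\tfrac12$, so
$$\sum_{|z_k|>1/2}\frac{(1-|z_k|)^{\alpha+1+\tau}}{|z_k|^{s}}\prod_{j=1}^N|z_k-\xi_j|^{t_j}\le 2^{s}\sum_{k}(1-|z_k|)^{\alpha+1+\tau}\prod_{j=1}^N|z_k-\xi_j|^{t_j}\le CK,$$
the last inequality being exactly Theorem~\ref{thm:c1}.

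For the near zeros the remaining weights are harmless: when $|z_k|\le\tfrac12$ one has $1-|z_k|\in[\tfrac12,1]$ and $|z_k-\xi_j|\in[\tfrac12,\tfrac32]$ (as $|\xi_j|=1$), so both $(1-|z_k|)^{\alpha+1+\tau}$ and $\prod_j|z_k-\xi_j|^{t_j}$ are bounded above and below by positive constants. Thus it suffices to prove $\sum_{|z_k|\le 1/2}|z_k|^{-s}\le CK$. For this I would use the near-origin decay of $\log|h|$: by the same elementary bounds on the factors $(1-|z|)^{-\alpha}$ and $|z-\xi_j|^{-\beta_j}$, \eqref{eq:c2} gives $\log|h(z)|\le M|z|^\gamma$ for $|z|\le\tfrac34$, with $M=CK$. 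Since $h(0)=1$, Jensen's formula yields, for $0<r\le\tfrac34$,
$$\int_0^r\frac{n(t)}{t}\,dt=\sum_{|z_k|<r}\log\frac{r}{|z_k|}=\frac{1}{2\pi}\int_0^{2\pi}\log|h(re^{i\theta})|\,d\theta\le M r^\gamma,$$
where $n(t)$ counts the zeros in $\{|z|\le t\}$. Since $n$ is nondecreasing, comparing $\int_r^{2r}n(t)/t\,dt$ with $n(r)\log 2$ gives the pointwise bound $n(r)\le CMr^\gamma$ for all $r\le\tfrac12$.

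It remains to insert this into the Stieltjes integral $\sum_{|z_k|\le 1/2}|z_k|^{-s}=\int_{0^+}^{1/2}t^{-s}\,dn(t)$. Because $h(0)\neq 0$ we have $n(t)=0$ near $t=0$, so integration by parts produces no singular boundary term and leaves $(1/2)^{-s}n(1/2)+s\int_0^{1/2}t^{-s-1}n(t)\,dt\le CM\big(1+\int_0^{1/2}t^{\gamma-s-1}\,dt\big)$. The integral converges precisely because $s<\gamma$: when $\gamma-1+\tau\le 0$ we have $s=0$ and the claim reduces to $n(1/2)\le CM$, while when $\gamma-1+\tau>0$ we have $s=\gamma-1+\tau$ and $\gamma-s-1=-\tau>-1$, so $\int_0^{1/2}t^{-\tau}\,dt<\infty$ thanks to $\tau<1$. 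This bounds the near-zero contribution by $CM=CK$ and completes the proof. The main point of the argument — and the place where the hypothesis $\tau<1$ is essential — is this last near-origin estimate: the extra vanishing $|z|^\gamma$ of $\log|h|$ at the origin limits how fast zeros may accumulate there, and $\tau<1$ is exactly the threshold making the weighted count $\sum|z_k|^{-(\gamma-1+\tau)}$ summable; everything away from the origin is already delivered by Theorem~\ref{thm:c1}.
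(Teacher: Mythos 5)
Your proof is correct and takes essentially the same route as the paper's: split the zeros at $|z|=\tfrac12$, dispose of the outer ones by Theorem \ref{thm:c1}, and control the inner ones by a Jensen-formula counting bound $n(r)\le CKr^\gamma$ (this is exactly the paper's Lemma \ref{lem:c1}), which is then integrated against $t^{-s}$ --- you via Stieltjes integration by parts, the paper via an equivalent layer-cake identity. The only slip is cosmetic: the comparison $n(r)\log 2\le\int_r^{2r}n(t)\,t^{-1}\,dt$ requires $2r\le\tfrac34$, i.e.\ $r\le\tfrac38$, so to cover all $r\le\tfrac12$ use the factor $\tfrac32$ instead of $2$ (as the paper does by choosing $s=\tfrac32 r$), or extend from $r\le\tfrac38$ by monotonicity of $n$.
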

We note that the results of the previous theorem
differ from the results  obtained in Theorem \ref{thm:c1} both in the hypothesis, which
requires $\log|h(z)|$ to vanish at $0$ at a specified rate, and
requires $\tau<1$, and in its conclusion, which includes the
$\frac{1}{|z|}$ term.
\begin{proof}[Proof of Theorem \ref{thm:c2}]
In view of Theorem \ref{thm:c1}, we only need to consider the case $\gamma > 1-\tau$. Set $\md_r=\{ z \in \mc : |z|<r \}$.  Using the boundedness of $\frac 1
{|z|}$ on $\md \setminus \md_{\frac 1 2}$ , we obtain from
Theorem \ref{thm:c1} that
\begin{small}
\begin{eqnarray*}
\sum_{z \in \md \setminus \md_{\frac 1 2},\; h(z)=0} \frac{(1-|z|)^{\alpha+1+\tau}}{|z|^{\gamma-1+\tau}}\prod_{j=1}^N|z-\xi_j|^{(\beta_j-1+\tau)_+}
 \leq
C(\alpha,\{\beta_j\},\gamma, \{\xi_j\}, \tau)K.
\end{eqnarray*}
\end{small}
Hence, the proof is completed by showing that
\begin{equation}\label{eq:c5}
\sum_{z \in \md_{\frac 1 2},\; h(z)=0} \frac{1}{|z|^{\gamma-1+\tau}}
\leq  C(\alpha,\{\beta_j\},\gamma, \{\xi_j\},\tau) K.
\end{equation}
Let $N_h(\md_r)$ denote the number of zeros of $h$ in $\md_r$ (multiplicities taken into account). Then we can rewrite the sum in (\ref{eq:c5}) as follows:
\begin{eqnarray}
&&\sum_{z \in \md_{\frac 1 2},\; h(z)=0} \frac{1}{|z|^{\gamma-1+\tau}}
= (\gamma-1+\tau) \sum_{z \in \md_{\frac 1 2},\; h(z)=0} \int_0^{\frac 1 {|z|}} dt \; t^{\gamma-2+\tau} \nonumber\\
&=& (\gamma-1+\tau) \int_0^{\infty} dt\; t^{\gamma-2+\tau} \left( \sum_{|z| < \min(\frac 1 2, t^{-1}),\; h(z)=0} 1 \right) \nonumber \\
&=& (\gamma-1+\tau) \left[ \int_0^2 dt\; t^{\gamma-2+\tau} N_h(\md_{\frac 1 2}) +  \int_2^\infty dt\; t^{\gamma-2+\tau} N_h(\md_{t^{-1}})  \right].
\label{eq:c6}
\end{eqnarray}
To estimate the last two integrals the following lemma is used.
\begin{lemma}\label{lem:c1}
Assume (\ref{eq:c2}). Then for $r \in (0,\frac 1 2]$ we have
\begin{eqnarray*}
  N_h(\md_r) \leq C(\alpha,\{\beta_j\}, \{\xi_j\})K r^{\gamma}  .
\end{eqnarray*}
\end{lemma}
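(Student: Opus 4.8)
The plan is to count the zeros via Jensen's formula, exploiting the fact that on the small disk $\md_r$ (with $r\le\frac12$) the boundary singularities at the points $\xi_j$ play no role, so that the full strength of Theorem \ref{thm:c1} is not needed. Since $h(0)=1\neq 0$, the function $h$ has only finitely many zeros in any closed disk $\overline{\md_\rho}$ with $\rho<1$, and Jensen's formula applies: for a suitable $\rho\in(0,1)$ (chosen so that $h$ has no zeros on $|z|=\rho$),
$$\sum_{z\in\md_\rho,\,h(z)=0}\log\frac{\rho}{|z|}=\frac{1}{2\pi}\int_0^{2\pi}\log\bigl|h(\rho e^{i\theta})\bigr|\,d\theta,$$
where the sum is taken with multiplicities and every term on the left is nonnegative.

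The key is the choice $\rho=\frac{3}{2}r$, which for $r\in(0,\frac12]$ satisfies $\rho\le\frac34<1$. Every zero $z$ with $|z|<r$ then contributes $\log\frac{\rho}{|z|}\ge\log\frac{\rho}{r}=\log\frac32$, so discarding the zeros in the annulus $r\le|z|<\rho$ gives
$$N_h(\md_r)\,\log\tfrac32\le\sum_{z\in\md_\rho,\,h(z)=0}\log\frac{\rho}{|z|}=\frac{1}{2\pi}\int_0^{2\pi}\log\bigl|h(\rho e^{i\theta})\bigr|\,d\theta.$$
It then remains to bound the integral from above using hypothesis (\ref{eq:c2}). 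Since the right-hand side of (\ref{eq:c2}) dominates $\log|h|$ pointwise, I would estimate each factor on the circle $|z|=\rho$: we have $\rho^\gamma=(\tfrac32)^\gamma r^\gamma$, the distance bound $|\rho e^{i\theta}-\xi_j|\ge 1-\rho\ge\tfrac14$ (valid because $|\xi_j|=1$ and $\rho\le\frac34$) gives $(1-\rho)^{-\alpha}\le 4^\alpha$, and $\prod_{j=1}^N|\rho e^{i\theta}-\xi_j|^{-\beta_j}\le 4^{\sum_j\beta_j}$. Feeding these bounds into the integral and combining with the previous display yields $N_h(\md_r)\le C(\alpha,\{\beta_j\})K r^\gamma$; note that the constant in fact turns out to be independent of the $\{\xi_j\}$.

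The only point requiring care is the choice of $\rho$: it must simultaneously keep $\log(\rho/r)$ bounded below by a positive constant, make $\rho^\gamma$ comparable to $r^\gamma$, and stay uniformly away from the unit circle so that the boundary singularities at the $\xi_j$ are harmless. The factor $\frac32$ achieves all three at once, and this is precisely why — in contrast to Theorem \ref{thm:c1}, where the singular behaviour near the $\xi_j$ is the main difficulty — the argument here is elementary. A minor technical matter is ensuring that $h$ has no zeros on the circle $|z|=\rho$; this can be arranged by replacing $\frac32 r$ with a nearby radius, or by appealing to the general form of Jensen's formula.
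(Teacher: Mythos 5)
Your proposal is correct and follows essentially the same route as the paper: Jensen's formula on the circle of radius $s=\frac{3}{2}r\le\frac{3}{4}$, discarding the zeros in the annulus to get the factor $\log\frac{3}{2}$, and bounding the integrand via (\ref{eq:c2}) using that the circle stays away from the unit circle. Your explicit pointwise bounds $(1-\rho)^{-\alpha}\le 4^{\alpha}$ and $\prod_j|\rho e^{i\theta}-\xi_j|^{-\beta_j}\le 4^{\sum_j\beta_j}$ merely make transparent what the paper leaves implicit in the final integral, and your worry about zeros on the circle $|z|=\rho$ is moot since Jensen's formula (Rudin, Theorem 15.18) accommodates boundary zeros.
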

\begin{proof}[Proof of Lemma \ref{lem:c1}]
Let $0 < r < s < 1$. From Jensen's identity (see e.g. Rudin
\cite{Rudin87}, Theorem 15.18) and assumption (\ref{eq:c2}) we obtain
\begin{eqnarray*}
N_h(\md_r) &=& \frac{1}{\log (\frac s r) } \sum_{z \in \md_r, h(z)=0} \log \left( \frac s r \right)
\leq \frac{1}{\log (\frac s r) } \sum_{z \in \md_r, h(z)=0} \log \left(  \frac s {|z|} \right) \\
&\leq& \frac{1}{\log (\frac s r) } \sum_{z \in \md_s, h(z)=0} \log \left(  \frac s {|z|} \right)
= \frac{1}{\log (\frac s r) } \frac{1}{2\pi} \int_0^{2\pi} \log |h(s e^{i\theta})| d\theta  \\
&\leq& \frac{1}{\log (\frac s r) }  \frac{K
s^\gamma}{(1-s)^\alpha} \frac{1}{2\pi} \int_0^{2\pi}
\prod_{j=1}^N \frac{1}{ | s e^{i\theta} - \xi_j|^{\beta_j}} d\theta .
\end{eqnarray*}
Choosing $s = \frac{3}{2} r$ (i.e. $s \leq \frac 3 4$) concludes the proof of the lemma.
\end{proof}
Returning to (\ref{eq:c6}), we can use Lemma \ref{lem:c1} and the
fact that $\gamma > 1 - \tau$ to conclude
\begin{eqnarray*}
  \int_0^2 dt \; t^{\gamma-2+\tau} N_h(\md_{\frac 1 2}) 
&\leq& C(\alpha,\{\beta_j\},\gamma, \{\xi_j\}, \tau) K.
\end{eqnarray*}
Similarly, using that $\tau<1$, Lemma \ref{lem:c1} implies that
\begin{eqnarray*}
\int_2^\infty dt \; t^{\gamma-2+\tau} N_h(\md_{t^{-1}})
&\leq& C(\alpha,\{\beta_j\}, \{\xi_j\}) K  \int_2^\infty dt \; t^{-2+\tau} \\
&\leq&
C(\alpha,\{\beta_j\},\gamma, \{\xi_j\}, \tau) K.
\end{eqnarray*}
This concludes the proof of Theorem \ref{thm:c2}.
\end{proof}

We now translate the result of Theorem \ref{thm:c2} into a result about
holomorphic functions on $\Complex\setminus [-2,2]$, which is the
one we will use below.
\begin{cor}\label{cor:c1}
 Let $g : \mc \setminus [-2,2] \to \mc$ be holomorphic with $\lim_{|\lambda| \to \infty} g(\lambda)=1$. For $\alpha, \beta \geq 0$ suppose that
\begin{equation}\label{eq:c8}
  \log |g(\lambda)| \leq \frac{K_0}{\dist(\lambda,[-2,2])^{\alpha}|\lambda^2-4|^{\beta} }.
\end{equation}
Let $0 < \tau < 1$ and set
\begin{eqnarray}\label{eq:c8b}
\begin{array}{clc}
  \eta_1 &=& \alpha + 1 +\tau, \\[4pt]
  \eta_2 &=& (2\beta+\alpha-1+\tau)_+.
\end{array}
\end{eqnarray}
Then,
\begin{equation}\label{eq:c9}
\sum_{\lambda \in \mc \setminus [-2,2], g(\lambda)=0}
\frac{\dist(\lambda,[-2,2])^{\eta_1}}{|\lambda^2-4|^{\frac{\eta_1-\eta_2}{2}}} \leq
C(\alpha,\beta,\tau)K_0,
\end{equation}
where each zero of $g$ is counted according to its multiplicity
\end{cor}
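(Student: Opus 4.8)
The plan is to transport the problem from $\mc \setminus [-2,2]$ to the unit disk $\md$ via the Joukowski map and then invoke Theorem \ref{thm:c2}. Concretely, I would set $\varphi(z) = z + 1/z$, which maps $\md \setminus \{0\}$ conformally and bijectively onto $\mc \setminus [-2,2]$ (the circle $|z|=1$ collapsing onto the segment, the endpoints $\pm 2$ being the images of $z = \pm 1$). Define $h = g \circ \varphi$ on $\md \setminus \{0\}$; since $\varphi(z) \to \infty$ as $z \to 0$ and $\lim_{|\lambda|\to\infty} g = 1$, the function $h$ is bounded near $0$, so the singularity is removable and $h$ extends to a holomorphic function on $\md$ with $h(0) = 1$. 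Because $\varphi$ is a biholomorphism, the zeros of $h$ in $\md \setminus \{0\}$ correspond bijectively, and with the same multiplicities, to the zeros of $g$ in $\mc \setminus [-2,2]$, and $h(0) = 1 \neq 0$.

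The computational backbone consists of the identities $\lambda - 2 = (z-1)^2/z$ and $\lambda + 2 = (z+1)^2/z$, which give $|\lambda^2 - 4|^{1/2} = |z-1|\,|z+1|/|z|$ exactly. The one genuinely nontrivial ingredient — and what I expect to be the main obstacle — is a two-sided estimate for the Euclidean distance to the segment, namely
\[
\dist(\lambda,[-2,2]) \asymp (1-|z|)\,\frac{|z-1|\,|z+1|}{|z|}, \qquad \text{equivalently} \qquad \frac{\dist(\lambda,[-2,2])}{|\lambda^2-4|^{1/2}} \asymp 1 - |z|,
\]
with constants independent of $z \in \md \setminus \{0\}$. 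I would prove this by splitting $\md \setminus \{0\}$ according to whether $\Re \lambda \in [-2,2]$, $\Re \lambda > 2$, or $\Re \lambda < -2$. On the middle region $\dist = |\Im \lambda|$ and the claim reduces, after writing $z = re^{i\theta}$, to elementary two-sided bounds on $\frac{|\sin\theta|}{\sqrt{(1-r^2)^2 + 4r^2\sin^2\theta}}$ valid precisely when $|\cos\theta| \le 2r/(1+r^2)$; on the region $\Re\lambda > 2$ one has $\dist = |\lambda - 2| = |z-1|^2/|z|$, so the ratio equals $|z-1|/|z+1|$, and here $\Re z > 0$ forces $|z+1| \asymp 1$ while a short computation gives $|z-1| \asymp 1 - |z|$; the region $\Re\lambda < -2$ is symmetric.

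With these estimates in hand the rest is bookkeeping. Feeding the hypothesis (\ref{eq:c8}) through $\varphi$ and using the lower bound for $\dist$ together with the exact formula for $|\lambda^2-4|$ yields
\[
\log|h(z)| \le C K_0 \, \frac{|z|^{\alpha+2\beta}}{(1-|z|)^\alpha \, |z-1|^{\alpha+2\beta}\,|z+1|^{\alpha+2\beta}},
\]
which is exactly the hypothesis (\ref{eq:c2}) of Theorem \ref{thm:c2} with $K = C K_0$, $\gamma = \alpha + 2\beta$, $N = 2$, $\xi_{1} = 1$, $\xi_2 = -1$, and $\beta_1 = \beta_2 = \alpha + 2\beta$; the power of $(1-|z|)$ is the same $\alpha$, and all exponents are nonnegative as required. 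Applying Theorem \ref{thm:c2} (legitimate since $0 < \tau < 1$) produces a bound on $\sum (1-|z|)^{\alpha+1+\tau} |z|^{-(\gamma-1+\tau)_+} |z-1|^{(\beta_1-1+\tau)_+}|z+1|^{(\beta_2-1+\tau)_+}$ by $C(\alpha,\beta,\tau)K_0$.

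Finally I would read off the exponents. Since $\gamma = \beta_1 = \beta_2 = \alpha + 2\beta$, all three truncated exponents collapse to the single value $(\alpha + 2\beta - 1 + \tau)_+ = \eta_2$, while $\alpha + 1 + \tau = \eta_1$; thus the summand becomes $(1-|z|)^{\eta_1}\big(|z-1||z+1|/|z|\big)^{\eta_2} = (1-|z|)^{\eta_1}|\lambda^2-4|^{\eta_2/2}$. Invoking the upper bound $1 - |z| \gtrsim \dist(\lambda,[-2,2])/|\lambda^2-4|^{1/2}$ from the key estimate gives $(1-|z|)^{\eta_1}|\lambda^2-4|^{\eta_2/2} \gtrsim \dist^{\eta_1}/|\lambda^2-4|^{(\eta_1-\eta_2)/2}$, and translating the sum back over the zeros of $g$ yields precisely (\ref{eq:c9}) with a constant depending only on $\alpha,\beta,\tau$. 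This closes the argument; the only place real work is needed is the distance comparison of the second paragraph.
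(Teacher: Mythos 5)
Your proposal is correct and follows essentially the same route as the paper: the Joukowski map $z\mapsto z+z^{-1}$, the two-sided comparison $\dist(\lambda,[-2,2])\asymp (1-|z|)\,|z^2-1|/|z|$ (the paper's Lemma \ref{lem:ko1}, proved by the same case split on $\Re\lambda$), the identity $|\lambda^2-4|=|z^2-1|^2/|z|^2$, and an application of Theorem \ref{thm:c2} with $\gamma=\beta_1=\beta_2=\alpha+2\beta$, $\xi_{1,2}=\pm1$, followed by the same exponent bookkeeping. No gaps; the only difference is that the paper works out explicit constants ($\tfrac12$ and $\tfrac{1+\sqrt2}{2}$) in the distance lemma, whereas you only need the comparison with uniform constants, which suffices.
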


\begin{proof}
We note that $z \mapsto z + z^{-1}$ maps $\md \setminus \{0 \}$ conformally onto $\mc \setminus [-2,2]$. So we can define
$$h(z)=g(z + z^{-1}), \quad z \in \md \setminus \{ 0 \}.$$ Setting $h(0)=1$, $h$ is holomorphic on $\md$ and
 \begin{equation}
   \log | h(z)| \leq \frac{K_0}{\dist(\lambda,[-2,2])^{\alpha}|\lambda^2-4|^{\beta} }, \label{eq:c9b}
 \end{equation}
where $\lambda=z+z^{-1}$. To express the right-hand side of the last equation in terms of $z$, the following Lemma is needed. Its proof is given below.
\begin{lemma}\label{lem:ko1}
For $\lambda=z+z^{-1}$ with $ z \in \md$, we have
  \begin{equation}\label{eq:ko1}
 \frac{1}{2} \frac{|z^2-1|(1-|z|)}{|z|} \leq \dist(\lambda,[-2,2]) \leq   \frac{1+\sqrt{2}}{2} \frac{|z^2-1|(1-|z|)}{|z|}.
  \end{equation}
\end{lemma}
In addition to the last lemma, a direct calculation shows that
\begin{equation}\label{eq:c11}
  |\lambda^2-4| = \left| \frac{z^2-1}{z} \right|^2.
\end{equation}
Using (\ref{eq:ko1}) and (\ref{eq:c11}), we can estimate (\ref{eq:c9b}) as follows
\begin{equation*}
  \log | h(z)| \leq \frac{2^{\alpha}K_0 |z|^{2\beta+\alpha}}{(1-|z|)^\alpha |z^2-1|^{2\beta+\alpha}}.
\end{equation*}
Hence, Theorem \ref{thm:c2} implies that for $0<\tau < 1$ and $\eta_1, \eta_2$ as defined in (\ref{eq:c8b}),
\begin{equation*}
\sum_{z \in \md, h(z)=0}
(1-|z|)^{\eta_1} \left| \frac{z^2-1}{z} \right|^{\eta_2} \leq
C(\alpha, \beta,\tau)K_0.
\end{equation*}
By (\ref{eq:ko1}) and (\ref{eq:c11}),
\begin{eqnarray*}
(1-|z|)^{\eta_1} \left| \frac{z^2-1}{z} \right|^{\eta_2} \geq \left( \frac 2 {1 + \sqrt{2}} \right)^{\eta_1}
\frac{ \dist(\lambda,[-2,2])^{\eta_1}}{|\lambda^2-4|^{\frac{\eta_1-\eta_2}{2}}}.
\end{eqnarray*}
This concludes the proof of Corollary \ref{cor:c1}.
\end{proof}
\begin{proof}[Proof of Lemma \ref{lem:ko1}]
Let $z\in\md$ and set $\lambda=z+z^{-1}$. We define $G_1=\{ z : \Re(\lambda) \leq -2\}$, $G_2= \{ z : \Re(\lambda) \geq 2\}$ and $G_3= \{ z : |\Re(\lambda)| < 2\}$.
Then
\begin{eqnarray}
  \dist(\lambda,[-2,2]) &=& \left\{
    \begin{array}{cl}
      |\lambda+2|=   \frac{|1+z|^2}{|z|}, &  z \in G_1 \\[2pt]
      |\lambda-2|=   \frac{|1-z|^2}{|z|}, &  z \in G_2 \\[2pt]
      |\Im \lambda|= |\Im z| \frac{1-|z|^2}{|z|^2}, &  z \in G_3.
    \end{array}\right. \label{eq:di}
\end{eqnarray}
We first show that for $z \in G_3$ the following holds
\begin{equation}\label{eq:ay}
  \frac{1}{\sqrt{2}} \frac{|z^2-1|(1-|z|)}{|z|} \leq \dist(\lambda,[-2,2]) \leq \frac{1+\sqrt{2}}{2} \frac{|z^2-1|(1-|z|)}{|z|}.
\end{equation}
By (\ref{eq:di}), this is equivalent to
\begin{equation}\label{eq:aax}
\frac 1 {\sqrt{2}} \leq   {|\Im z|} \frac{1+|z|}{|z||z^2-1|} \leq \frac{1+\sqrt{2}}{2}.
\end{equation}
Switching to polar coordinates we see that  $re^{i \theta} \in G_3$ if $\cos^2(\theta) <  \frac {4r^2} {(1+r^2)^2}$ and (\ref{eq:aax}) can be rewritten as
\begin{equation}\label{eq:as1}
\frac 1 {\sqrt{2}} \leq  \frac{(1+r)\sqrt{1-\cos^2(\theta)}}{\sqrt{(1+r^2)^2-4{r^2}\cos^2(\theta)}} \leq \frac{1+\sqrt{2}}{2}.
\end{equation}
For $x=\cos^2(\theta)$ and fixed $r$ we define
\begin{equation*}\label{eq:kl2}
f(x)= \frac{{1-x}}{{(1+r^2)^2-4{r^2}x}} \quad , 0 \leq x <  \frac {4r^2} {(1+r^2)^2}.
\end{equation*}
It is easy to see that $f$ is monotonically decreasing. We thus obtain
\begin{equation*}\label{eq:ad}
\frac{1}{1+6r^2+r^4}= f\left( \frac {4r^2} {(1+r^2)^2} \right) \leq  f(x) \leq f(0) = \frac{1}{(1+r^2)^2}.
\end{equation*}
The last chain of inequalities implies  the validity of (\ref{eq:as1}) and (\ref{eq:aax}) since
\[ \sup_{r \in [0,1]} \frac{1+r}{1+r^2}=\frac{1+\sqrt{2}}{2} \quad \text{and} \quad \inf_{r \in [0,1]} \frac{1+r}{\sqrt{1+6r^2+r^4}}= \frac 1 {\sqrt{2}}. \]
Next, we show that the following holds for  $z \in G_1 \cup G_2$,
\begin{equation*}\label{eq:az}
\frac 1 2 \frac{|z^2-1|(1-|z|)}{|z|} \leq \dist(\lambda,[-2,2]) \leq \frac{1+\sqrt{2}}{2} \frac{|z^2-1|(1-|z|)}{|z|}.
\end{equation*}
By symmetry, it is sufficient to show it for $z \in G_1$. In this case, by (\ref{eq:di}), the last inequality is equivalent to
\begin{equation}\label{eq:rt}
  \frac 1 2 \leq \frac{|z+1|}{|z-1|(1-|z|)} \leq \frac{1+\sqrt{2}}{2}.
\end{equation}
Switching to polar coordinates , we have to show that
\begin{equation}\label{eq:wk}
  \frac 1 2  \leq  \frac{1}{1-r} \sqrt{ \frac{r^2+1+2r\cos(\theta)}{r^2+1-2r\cos(\theta)}} \leq \frac{1+\sqrt{2}}{2},
\end{equation}
where $\cos(\theta) \leq  \frac {-2r} {1+r^2}$. For $y=\cos(\theta)$ and fixed $r$ we define
\begin{equation}\label{eq:kj}
  \tilde{f}(y)=\frac{r^2+1+2ry}{r^2+1-2ry} \quad, -1 \leq y \leq  \frac {-2r} {1+r^2}.
\end{equation}
A short calculation shows that $\tilde{f}$ is monotonically increasing and we obtain that
\begin{equation}\label{eq:kl}
  \left( \frac{ 1-r}{1+r} \right)^2= \tilde{f}(-1) \leq \tilde{f}(y) \leq \tilde{f} \left(  \frac {-2r} {1+r^2} \right) =  \frac{(1-r^2)^2}{1+6r^2+r^4} .
\end{equation}
(\ref{eq:kj}) and (\ref{eq:kl}) imply the validity of (\ref{eq:wk}) and (\ref{eq:rt}) since
\begin{equation}
  \inf_{r \in [0,1]} \frac{1}{1+r} = \frac 1 2 \quad \text{ and } \quad \sup_{r \in [0,1]}  \frac{1+r}{\sqrt{1+6r^2+r^4}} \leq \frac{1+\sqrt{2}}{2}.
\end{equation}
This concludes the proof of Lemma \ref{lem:ko1}.
\end{proof}

\section{Proof of Theorem \ref{thm2} }\label{proofs}

We denote the standard basis of $l^2(\Integer)$  by
$\{\delta_k\}_{k\in \Integer}$, i.e. $\delta_k(k)=1$ and
$\delta_k(j)=0$ for $j\neq k$. Let the diagonal operator $D \in
\mathbf{B}(l^2(\mz))$ be defined via $D\delta_k=d_k \delta_k$, where
the  sequence $d=\{d_k\}$ is as defined in (\ref{defdk}), i.e.
$$ d_k=\max\Big(|a_{k-1}-1|,|a_{k}-1|,|b_k|,|c_{k-1}-1|,|c_{k}-1|\Big).$$
Furthermore, the operator $U \in \mathbf{B}(l^2(\mz))$ is given by
$$U \delta_k=u_k^- \delta_{k-1} +
u_k^0\delta_k +u_k^+\delta_{k+1},$$ where
$$u_k^-=\frac{c_{k-1}-1}{\sqrt{d_{k-1}d_k}},\quad
u_k^0=\frac{b_k}{d_k},\quad u_k^+=\frac{a_{k}-1}{\sqrt{d_{k+1}d_k}}.$$
Here we use the convention $\frac{0}{0}=1$. It is then easily checked that
\begin{equation}\label{rj}J-J_0=D^{\frac 1 2} U D^{\frac 1 2}.\end{equation}
Moreover, the definition of $\{d_k\}$ implies
$$|u_k^-|<1,\;\;|u_k^0|<1,\;\;|u_k^+|<1,$$
showing  that $\|U\|\leq 3$.

The following lemma is a variation on Theorem 2.4 of
\cite{killipsimon}.
\begin{lemma}\label{esti1}
  Let $p\geq 1$ and $d \in l^p(\mz)$. Then $J-J_0 \in \mathbf{S}_p$ and
  \begin{equation}
   \frac{1}{6^{\frac{1}{p}}} \|d\|_{l^p}\leq   \|J-J_0\|_{\mathbf{S}_p} \leq 3 \|d\|_{l^p}.
  \end{equation}
\end{lemma}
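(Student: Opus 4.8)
The plan is to prove the two inequalities by separate arguments, relying on the factorization $J-J_0 = D^{\frac{1}{2}} U D^{\frac{1}{2}}$ from (\ref{rj}) for the upper bound and on a direct inspection of the matrix entries of $J-J_0$ for the lower bound. I begin with the upper bound. Since $D^{\frac{1}{2}}$ is the diagonal operator with entries $d_k^{\frac{1}{2}}$, its singular values are exactly $\{d_k^{\frac{1}{2}}\}_{k \in \mz}$, so that $\|D^{\frac{1}{2}}\|_{\mathbf{S}_{2p}}^{2p} = \sum_{k} d_k^{p} = \|d\|_{l^p}^{p}$, i.e.\ $\|D^{\frac{1}{2}}\|_{\mathbf{S}_{2p}} = \|d\|_{l^p}^{\frac{1}{2}}$. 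Applying H\"older's inequality (\ref{holder}) with $\frac{1}{p} = \frac{1}{2p} + \frac{1}{2p}$, together with the standard bound $\|MN\|_{\mathbf{S}_q} \leq \|M\|_{\mathbf{S}_q}\|N\|$ for bounded $N$ and the estimate $\|U\| \leq 3$ noted above, then gives
\[
\|J-J_0\|_{\mathbf{S}_p} \leq \|D^{\frac{1}{2}}\|_{\mathbf{S}_{2p}}\,\|U D^{\frac{1}{2}}\|_{\mathbf{S}_{2p}} \leq \|U\|\,\|D^{\frac{1}{2}}\|_{\mathbf{S}_{2p}}^{2} \leq 3\|d\|_{l^p},
\]
which in particular shows $J-J_0 \in \mathbf{S}_p$.

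For the lower bound, the idea is to recover each of the five sequences entering the definition (\ref{defdk}) of $d_k$ from the Schatten norm of $A := J-J_0$. Writing out $A\delta_j$ one finds that the only nonzero matrix entries are
\[
\langle \delta_{j+1}, A\delta_j\rangle = a_j-1, \qquad \langle \delta_j, A\delta_j\rangle = b_j, \qquad \langle \delta_{j-1}, A\delta_j\rangle = c_{j-1}-1.
\]
The analytic input is the pinching inequality: for any orthonormal basis $\{e_n\}$ and any $C \in \mathbf{S}_p$ with $p \geq 1$ one has $\sum_n |\langle e_n, Ce_n\rangle|^p \leq \|C\|_{\mathbf{S}_p}^p$, since the compression of $C$ to its diagonal is a contraction in every Schatten norm. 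Letting $S$ be the unitary shift $S\delta_n = \delta_{n+1}$, the off-diagonal entries above become diagonal entries of $S^{*}A$ and $SA$; as $S$ is unitary, $\|S^{*}A\|_{\mathbf{S}_p} = \|SA\|_{\mathbf{S}_p} = \|A\|_{\mathbf{S}_p}$, and the pinching inequality yields
\[
\sum_k |a_k-1|^p \leq \|A\|_{\mathbf{S}_p}^p, \qquad \sum_k |b_k|^p \leq \|A\|_{\mathbf{S}_p}^p, \qquad \sum_k |c_k-1|^p \leq \|A\|_{\mathbf{S}_p}^p.
\]

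Finally I would combine these bounds with the elementary estimate $d_k^p \leq |a_{k-1}-1|^p + |a_k-1|^p + |b_k|^p + |c_{k-1}-1|^p + |c_k-1|^p$, which follows at once from (\ref{defdk}) because the maximum of finitely many nonnegative numbers is bounded by their sum. Summing over $k \in \mz$ and using that the substitution $k \mapsto k-1$ leaves the sums $\sum_k|a_k-1|^p$ and $\sum_k|c_k-1|^p$ unchanged, the three displayed bounds give $\|d\|_{l^p}^p \leq 5\,\|J-J_0\|_{\mathbf{S}_p}^p \leq 6\,\|J-J_0\|_{\mathbf{S}_p}^p$, which is precisely the asserted lower bound $6^{-1/p}\|d\|_{l^p} \leq \|J-J_0\|_{\mathbf{S}_p}$. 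The one nonroutine ingredient is the pinching inequality for general $p \geq 1$: it is immediate for $p=2$ from orthogonality in the Hilbert--Schmidt expansion and elementary for $p=1$ via a partial-trace estimate, but the intermediate range is best handled either by complex interpolation between $\mathbf{S}_1$ and $\mathbf{S}_\infty$ or by invoking the known contractivity of the pinching map, and this is the step that carries the real content of the lower bound.
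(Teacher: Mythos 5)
Your proof is correct. The upper bound is exactly the paper's argument: the factorization $J-J_0 = D^{\frac 1 2}UD^{\frac 1 2}$ from (\ref{rj}), H\"older's inequality (\ref{holder}), and $\|U\|\leq 3$. The lower bound, however, takes a genuinely different route. The paper simply cites Lemma 2.3(iii) of Killip--Simon \cite{killipsimon}, which gives $\sum_k [|a_k-1|^p + |b_k|^p + |c_k-1|^p] \leq 3\|J-J_0\|_{\mathbf{S}_p}^p$, and combines it with $d_k^p \leq$ (sum of five entry terms) to obtain the constant $6$. You instead reprove that estimate from scratch: you observe that the three sequences of nonzero entries of $A=J-J_0$ arise as the diagonal entries of $S^{*}A$, $A$ and $SA$ (with $S$ the bilateral shift), and then invoke unitary invariance of the Schatten norm together with the pinching inequality $\sum_n |\langle e_n, Ce_n\rangle|^p \leq \|C\|_{\mathbf{S}_p}^p$ for $p\geq 1$. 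This is correct and self-contained modulo the pinching inequality, which, as you note, is classical (trivial for the operator norm, a diagonal-unitary duality argument at $p=1$, complex interpolation in between); be aware it fails for $0<p<1$, so the restriction $p\geq 1$ is essential and matches the lemma's hypothesis. Your bookkeeping even yields the slightly sharper constant $5^{-1/p}$ in place of $6^{-1/p}$, since you bound each of the five terms in $d_k^p \leq |a_{k-1}-1|^p+|a_k-1|^p+|b_k|^p+|c_{k-1}-1|^p+|c_k-1|^p$ by $\|A\|_{\mathbf{S}_p}^p$ individually, rather than passing through the factor $2\cdot 3$ as the paper does. What the paper's citation buys is brevity; what your argument buys is self-containedness and an explicit identification of where the analytic content of the lower bound really lies, namely the contractivity of the diagonal compression on $\mathbf{S}_p$.
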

\begin{proof}
 From (\ref{rj}) we obtain
 \begin{eqnarray*}
   \|J-J_0\|_{\mathbf{S}_p} &=& \|D^{\frac 1 2}UD^{\frac 1 2}\|_{\mathbf{S}_p} \leq \|D^{\frac 1 2}\|_{\mathbf{S}_{2p}} \|UD^{\frac 1 2}\|_{\mathbf{S}_{2p}}  \\
&\leq& \|U\|\|D^{\frac 1 2}\|_{\mathbf{S}_{2p}}^2 \leq 3 \|D^{\frac 1 2}\|_{\mathbf{S}_{2p}}^2 = 3 \|d\|_{l^p}.
 \end{eqnarray*}
In the first estimate we applied H\"older's inequality for Schatten norms, see (\ref{holder}). The last equality is valid since the diagonal operator $D^{\frac 1 2}$ is selfadjoint and nonnegative with
eigenvalues $d_k^{\frac 1 2}$.

For the inequality in the other direction, we use that (see
\cite{killipsimon}, Lemma 2.3(iii))
$$\sum_{k\in \Integer}[|a_k-1|^p+|c_k-1|^p+|b_k|^p]\leq 3 \|J-J_0\|_{{\bf{S}}^p}^p.$$
Since
\begin{eqnarray*}
\|d\|_{l^p}^p &=& \sum_{k\in \Integer}\max\Big(|a_{k-1}-1|^p,|a_{k}-1|^p,|b_k|^p,|c_{k-1}-1|^p,|c_{k}-1|^p\Big) \\
&\leq& 2\sum_{k\in \Integer}[|a_k-1|^p+|c_k-1|^p+|b_k|^p],
\end{eqnarray*}
the result follows.
\end{proof}
In the sequel, we suppose that  $d\in l^p(\mz)$ for some fixed $p\geq1$.
Using the results of Section \ref{prelim}, we can define the holomorphic function $g : \mc \setminus [-2,2] \to \mc$ via
\begin{equation}
 g(\lambda) = {\det}_{\lceil p \rceil}(I-(\lambda-J_0)^{-1}(J-J_0)), \label{eq:def_g1}
\end{equation}
and the zeros of $g$ coincide with the eigenvalues of $J$ in $\mc \setminus [-2,2]$, where multiplicity is taken into account. We further note that $\lim_{|\lambda| \to \infty}g(\lambda)=1$.

For $\lambda \in \mc \setminus [-2,2]$  we define
\begin{equation}\label{G}
G(\lambda)=D^{\frac 1 2}(\lambda-J_0)^{-1}D^{\frac 1 2}.
\end{equation}
We will see below that $G(\lambda) \in \mathbf{S}_p$. Hence, (\ref{eq:comm}) and (\ref{rj}) allow to derive the following alternative representation of $g$.
\begin{eqnarray}
  g(\lambda)&=& {\det}_{\lceil p \rceil}(I-(\lambda-J_0)^{-1}D^{\frac 1 2}UD^{\frac 1 2}) \nonumber \\
  &=& {\det}_{\lceil p \rceil}(I-G(\lambda)U).
\end{eqnarray}
From (\ref{inequality}) we further obtain that
\begin{equation}\label{Q1}
  \log | g(\lambda)| \leq \Gamma_p \| G(\lambda)U\|_{\mathbf{S}_p}^p \leq \Gamma_p 3^p \| G(\lambda)\|_{\mathbf{S}_p}^p.
\end{equation}
The following lemma provides some information on the Schatten norm of $G(\lambda)$.
\begin{lemma}\label{lem:est_g}
 Let $d \in l^p(\mz),$ where $p \geq 1$. Then $G(\lambda) \in \mathbf{S}_p$ and for $p>1$,
 \begin{equation}\label{Q2}
   \|  G(\lambda)\|_{\mathbf{S}_p}^p \leq  \frac{C(p) \|d\|_{l^p}^p}{\dist(\lambda,[-2,2])^{p-1}|\lambda^2-4|^{\frac 1 2}}.
 \end{equation}
Furthermore, for every $0<\eps<1$,
 \begin{equation}\label{Q3}
   \|  G(\lambda)\|_{\mathbf{S}_1} \leq \frac{C(\eps)\|d\|_{l^1} }{\dist(\lambda,[-2,2])^{\eps}|\lambda^2-4|^{\frac{(1-\eps)} 2}}.
 \end{equation}
\end{lemma}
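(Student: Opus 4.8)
The plan is to diagonalise the free operator by the Fourier transform. Under the unitary identification $l^2(\mz)\cong L^2([-\pi,\pi])$ sending $\delta_k$ to $e^{ik\theta}$, $J_0$ becomes multiplication by $2\cos\theta$, so the resolvent $(\lambda-J_0)^{-1}$ becomes the Fourier multiplier $T$ with symbol $m_\lambda(\theta)=(\lambda-2\cos\theta)^{-1}$. Since $D^{\frac12}$ is multiplication by the sequence $d^{\frac12}$, the operator in \eqref{G} factors as $G(\lambda)=M_{d^{1/2}}\,T\,M_{d^{1/2}}$, a position multiplication sandwiching a Fourier multiplier. (Equivalently, with $\lambda=z+z^{-1}$ one checks $\langle\delta_j,(\lambda-J_0)^{-1}\delta_k\rangle=z^{|j-k|+1}/(1-z^2)$, consistent with the Poisson kernel.) As $2\cos\theta$ sweeps out $[-2,2]$, we have $\norm{m_\lambda}_{L^\infty}=\dist(\lambda,[-2,2])^{-1}$.

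I would reduce both \eqref{Q2} and \eqref{Q3} to the single \emph{master estimate}
\[ \norm{G(\lambda)}_{\mathbf{S}_p}\le C(p)\,\norm{d}_{l^p}\left(\frac1{2\pi}\int_{-\pi}^{\pi}\abs{m_\lambda(\theta)}^p\,d\theta\right)^{1/p},\qquad p\ge 1, \]
i.e.\ the $l^p$-norm of the weight times the $L^p$-norm of the symbol. I would prove this at three endpoints and interpolate. For $p=2$ it is exact: $\norm{G}_{\mathbf S_2}^2=\sum_{j,k}d_jd_k\abs{\langle\delta_j,T\delta_k\rangle}^2\le\norm{d}_{l^2}^2\sum_m\abs{\langle\delta_0,T\delta_m\rangle}^2=\norm{d}_{l^2}^2\norm{m_\lambda}_{L^2}^2$, using $\sum_j d_jd_{j+m}\le\norm{d}_{l^2}^2$ and Parseval. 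For $p=\infty$, $\norm{G}\le\norm{M_{d^{1/2}}}^2\norm{T}=\norm{d}_{l^\infty}\norm{m_\lambda}_{L^\infty}$. For $p=1$ I would use the rank-one representation $T=\frac1{2\pi}\int_{-\pi}^{\pi}m_\lambda(\theta)\,\langle e_\theta,\cdot\rangle e_\theta\,d\theta$ with $e_\theta=(e^{ik\theta})_k$; after sandwiching, the vectors $v_\theta:=M_{d^{1/2}}e_\theta$ lie in $l^2$ with $\norm{v_\theta}^2=\sum_k d_k=\norm{d}_{l^1}$, so $G(\lambda)=\frac1{2\pi}\int m_\lambda(\theta)\langle v_\theta,\cdot\rangle v_\theta\,d\theta$ is a Bochner integral in $\mathbf S_1$ and the triangle inequality gives $\norm{G}_{\mathbf S_1}\le\frac1{2\pi}\int\abs{m_\lambda(\theta)}\norm{v_\theta}^2\,d\theta=\norm{d}_{l^1}\norm{m_\lambda}_{L^1}$. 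The general case follows by Stein complex interpolation between $p=1,2$ (for $1\le p\le2$) and between $p=2,\infty$ (for $p\ge2$), along an analytic family that raises $D$ to a $\zeta$-dependent power (interpolating $\norm{d}_{l^p}$) and reshapes the symbol (interpolating $\norm{m_\lambda}_{L^p}$), via $[\mathbf S_{p_0},\mathbf S_{p_1}]_\theta=\mathbf S_p$.

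It then remains to estimate $\int\abs{m_\lambda}^p$. For $p>1$ I would prove $\frac1{2\pi}\int_{-\pi}^{\pi}\frac{d\theta}{\abs{\lambda-2\cos\theta}^p}\le\frac{C(p)}{\dist(\lambda,[-2,2])^{p-1}\abs{\lambda^2-4}^{1/2}}$; inserting this into the master estimate and raising to the $p$-th power gives \eqref{Q2}. The integral bound follows by splitting $[-\pi,\pi]$ into a neighbourhood of the minimiser $\theta_0$ (where $2\cos\theta_0$ is the nearest point of $[-2,2]$ to $\lambda$) and its complement, using $\abs{\lambda-2\cos\theta}^2\gtrsim\dist(\lambda,[-2,2])^2+\abs{\lambda^2-4}(\theta-\theta_0)^2$ near $\theta_0$. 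For \eqref{Q3} I would start from the $p=1$ master bound and write $\abs{m_\lambda}=\abs{m_\lambda}^{\eps}\abs{m_\lambda}^{1-\eps}\le\dist(\lambda,[-2,2])^{-\eps}\abs{m_\lambda}^{1-\eps}$; since $1-\eps<1$ the remaining integral $\int\abs{m_\lambda}^{1-\eps}\,d\theta\le C(\eps)\abs{\lambda^2-4}^{-(1-\eps)/2}$ converges up to the spectrum, producing exactly the $\eps$-dependent denominator.

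The genuinely delicate points are two. First, the master estimate for $1<p<2$: the naive triangle or H\"older bound on the diagonals overestimates by a full power of $(1-\abs z)$, so one really needs the sharp $L^1$-endpoint coming from the rank-one representation, together with the interpolation (which carries two functions simultaneously and must yield the $J$-independent constant the theorem requires). Second, the uniformity of the symbol integral across regimes — $\lambda$ near the interior of $[-2,2]$ versus $\lambda$ near the edges $\pm2$ — where $\dist(\lambda,[-2,2])$ and $\abs{\lambda^2-4}^{1/2}$ scale differently and must be tracked together. The factor $\dist^{-\eps}$ in \eqref{Q3} is unavoidable precisely because $\int\abs{m_\lambda}$ diverges logarithmically as $\lambda\to[-2,2]$, and this logarithm is what the $\eps$ absorbs.
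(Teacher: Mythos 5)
Your proposal is correct in substance and shares the paper's skeleton---diagonalize $J_0$ by the Fourier transform, reduce \eqref{Q2} and \eqref{Q3} to a master bound of the form $\|G(\lambda)\|_{\mathbf{S}_p}\lesssim \|d\|_{l^p}\|v_\lambda\|_{L^p}$, then estimate the symbol integral---but it handles the crucial Schatten-norm step by a genuinely different mechanism. The paper never needs your $\mathbf{S}_1$ endpoint or any interpolation below $p=2$: it factors the symbol as $v_\lambda=|v_\lambda|^{1/2}\cdot\frac{v_\lambda}{|v_\lambda|}\cdot|v_\lambda|^{1/2}$, absorbs the middle factor into a unitary, and thereby writes $G(\lambda)$ (up to that unitary) as a product of two copies of $A=D^{1/2}\mathcal{F}^{-1}M_{|v_\lambda|^{1/2}}\mathcal{F}$; H\"older's inequality \eqref{holder} gives $\|G(\lambda)\|_{\mathbf{S}_p}^p\leq\|A\|_{\mathbf{S}_{2p}}^{2p}$, and since $2p\geq 2$ the discrete Kato--Seiler--Simon bound (Lemma \ref{lem:simon}, itself proved by interpolating only between $q=2$ and $q=\infty$) applies to $A$ and yields the master estimate for every $p\geq1$ at once; your ``genuinely delicate'' range $1\leq p<2$ is precisely what this square-root trick sidesteps, the case $p=1$ reducing to the trivial Hilbert--Schmidt case $2p=2$. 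Your alternative---the rank-one Bochner representation $G(\lambda)=\frac 1{2\pi}\int m_\lambda(\theta)\langle v_\theta,\cdot\rangle v_\theta\, d\theta$ for the $\mathbf{S}_1$ endpoint, plus Stein interpolation on both sides of $p=2$---is sound and would generalize to situations where no such square-root commutation is available, but it is heavier than what you get credit for: $[\mathbf{S}_{p_0},\mathbf{S}_{p_1}]_\theta=\mathbf{S}_p$ alone does not suffice, because the operator varies with the interpolation parameter (the weight must be raised to a $\zeta$-dependent power and the endpoint bounds re-verified for the resulting complex weights), so you need the full analytic-family theorem, which is exactly the machinery already sitting inside Lemma \ref{lem:simon}. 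The remaining ingredients match the paper's: your symbol-integral bound for $p>1$ is the content of Lemma \ref{lem:resolv1} (where the paper's case analysis a.1)--b.2) replaces your unified pointwise bound $|\lambda-2\cos\theta|^2\gtrsim\dist(\lambda,[-2,2])^2+|\lambda^2-4|\,|\theta-\theta_0|^2$, which is correct in all regimes but is only sketched and would still need a uniform proof), and for \eqref{Q3} your pointwise splitting $|m_\lambda|\leq\dist(\lambda,[-2,2])^{-\eps}|m_\lambda|^{1-\eps}$ plus an $L^{1-\eps}$ integral bound is equivalent to the paper's one-line H\"older reduction $\|v_\lambda\|_{L^1}\leq\|v_\lambda\|_{L^r}\|1\|_{L^{r/(r-1)}}$ with $r=\frac1{1-\eps}$, which has the small advantage of only ever invoking \eqref{eq:resolv1} for exponents greater than $1$. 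Both routes deliver the $J$-independent constants the theorem requires, since every bound involved depends only on $\|d\|_{l^p}$ and on $v_\lambda$.
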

The proof of Lemma \ref{lem:est_g} will be given below. First, let us continue with the proof of Theorem \ref{thm2}.
Fix $\tau \in (0,1)$. We consider the case $p>1$ first. From (\ref{Q1}) and (\ref{Q2}) we obtain that
 \begin{eqnarray*}
   \log|g(\lambda)|\leq \frac{C(p)\|d\|_{l^p}^p }{\dist(\lambda,[-2,2])^{p-1}|\lambda^2-4|^{\frac 1 2}}.
 \end{eqnarray*}
Hence, we can  apply Corollary \ref{cor:c1} with $\alpha=p-1$ and $\beta = \frac 1 2$ (i.e. $\eta_1=p+\tau$ and $\eta_2=p-1+\tau$) to obtain
\begin{equation}
\sum_{\lambda \in \mc\setminus[-2,2],g(\lambda)=0}
\frac{\dist(\lambda,[-2,2])^{p+\tau}}{|\lambda^2-4|^{\frac{1}{2}}} \leq
C(p,\tau)\|d\|_{l^p}^p.
\end{equation}
Noting that the eigenvalues of $J$ coincide with the zeros of $g$ concludes the proof in case that $p>1$.
In case $p=1$, we obtain from (\ref{Q1}) and (\ref{Q3}) that
for $0<\eps<1$,
 \begin{eqnarray*}
   \log|g(z)|\leq \frac{C(\eps)\|d\|_{l^1} }{\dist(\lambda,[-2,2])^{\eps}|\lambda^2-4|^{\frac{(1-\eps)} 2}}.
 \end{eqnarray*}
As above, an application of Corollary \ref{cor:c1} shows that for every $\tilde{\tau} \in (0,1)$
\begin{equation}
  \sum_{\lambda \in \sigma_d(J)}
\frac{\dist(\lambda,[-2,2])^{1+\eps+\tilde{\tau}}}{|\lambda^2-4|^{\frac {(1+\eps)} 2}} \leq
C(\tilde\tau,\eps)\|d\|_{l^1}.
\end{equation}
Choosing $\eps=\tilde{\tau}=\frac \tau 2$ concludes the proof of Theorem \ref{thm2}.

\subsection{Proof of Lemma \ref{lem:est_g}}
We define the Fourier transform $\mathcal{F}:l^2(\mz) \to
L^2(0,2\pi)$ by
$$(\mathcal{F}f)(\theta)= \frac 1 {\sqrt{2\pi}} \sum_k
e^{ik\theta}f_k$$
and note that for $f \in l^2(\mz)$ and $\theta \in [0,2\pi)$:
$$(\mathcal{F}J_0 f)(\theta) = 2\cos(\theta) (\mathcal{F}f)(\theta).$$
 Consequently, for $\lambda \in \Complex\setminus [2,2]$,
  \begin{eqnarray*}
    (\lambda-J_0)^{-1}= \mathcal{F}^{-1}M_{v_\lambda}\mathcal{F},
  \end{eqnarray*}
where $M_{v_\lambda} \in \mathbf{B}(L^2(0,2\pi))$ is the operator of multiplication with the bounded function
\begin{equation}
  v_\lambda(\theta)= (\lambda-2\cos(\theta))^{-1}, \quad \theta \in [0,2\pi).\label{Q4}
\end{equation}
Since
$$ v_{\lambda}=|v_{\lambda}|^{\frac 1 2} \cdot \frac{v_\lambda}{|v_{\lambda}|} \cdot |v_{\lambda}|^{\frac 1 2},$$
we can define the unitary operator  $T=\mathcal{F}^{-1} M_{\frac{v_{\lambda}}{|v_{\lambda}|}} \mathcal{F}$, to obtain the following identity
\begin{eqnarray*}
(\lambda-J_0)^{-1}= \mathcal{F}^{-1} M_{|v_\lambda|^{\frac 1 2}} \mathcal{F} T \mathcal{F}^{-1} M_{|v_\lambda|^{\frac 1 2}} \mathcal{F}.
\end{eqnarray*}
With definition (\ref{G}) and H\"older's inequality, see (\ref{holder}), we obtain
\begin{eqnarray}
  \| G(\lambda)\|_{\mathbf{S}_p}^p &=& \| D^{\frac 1 2} \mathcal{F}^{-1} M_{|v_\lambda|^{\frac 1 2}} \mathcal{F} T \mathcal{F}^{-1} M_{|v_\lambda|^{\frac 1 2}} \mathcal{F} D^{\frac 1 2}\|_{\mathbf{S}_p}^p  \nonumber  \\
 &\leq& \| D^{\frac 1 2} \mathcal{F}^{-1} M_{|v_\lambda|^{\frac 1 2}} \mathcal{F}\|_{\mathbf{S}_{2p}}^p \|T  \mathcal{F}^{-1} M_{|v_\lambda|^{\frac 1 2}} \mathcal{F}D^{\frac 1 2}\|_{\mathbf{S}_{2p}}^p  \nonumber  \\
 &\leq& \| D^{\frac 1 2} \mathcal{F}^{-1} M_{|v_\lambda|^{\frac 1 2}} \mathcal{F}\|_{\mathbf{S}_{2p}}^p \|\mathcal{F}^{-1} M_{|v_\lambda|^{\frac 1 2}} \mathcal{F}D^{\frac 1 2}\|_{\mathbf{S}_{2p}}^p \nonumber \\
 &=& \| D^{\frac 1 2} \mathcal{F}^{-1} M_{|v_\lambda|^{\frac 1 2}} \mathcal{F}\|_{\mathbf{S}_{2p}}^{2p}. \label{R1}
\end{eqnarray}
For the last identity we used the selfadjointness of $D^{\frac 1 2}$ and $\mathcal{F}^{-1} M_{|v_\lambda|^{\frac 1 2}} \mathcal{F}$, and the fact that the Schatten norm is invariant under taking the adjoint. To derive an estimate on the Schatten norm of $D^{\frac 1 2} \mathcal{F}^{-1} M_{|v_\lambda|^{\frac 1 2}} \mathcal{F}$, we will use the following lemma. Here, as above, we denote the diagonal operator  corresponding to a sequence $k=\{k_m\} \in l^\infty(\mz)$ by $K$, i.e. $K\delta_m = k_m \delta_m$.
\begin{lemma} \label{lem:simon}
Let $q \geq 2$. Suppose that $k=\{k_m\} \in l^q(\mz)$ and $v \in L^q(0,2\pi)$. Then the following holds,
\begin{equation}
  \|K \mathcal{F}^{-1} M_v \mathcal{F} \|_{\mathbf{S}_q} \leq (2\pi)^{-1/q} \|k\|_{l^q} \|v\|_{L^q}.
\end{equation}
\end{lemma}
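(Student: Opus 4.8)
The plan is to reduce the estimate to two endpoint cases, $q=2$ and $q=\infty$, and then interpolate. First I would compute the matrix of $A:=K\mathcal F^{-1}M_v\mathcal F$ in the standard basis. Since $(\mathcal F\delta_n)(\theta)=(2\pi)^{-1/2}e^{in\theta}$, a direct calculation gives
\[ A_{mn}=\langle A\delta_n,\delta_m\rangle = k_m\,\hat v(m-n),\qquad \hat v(j):=\tfrac1{2\pi}\int_0^{2\pi}v(\theta)e^{-ij\theta}\,d\theta. \]
For $q=2$ the Schatten norm is the Hilbert--Schmidt norm, so summing $|A_{mn}|^2$ over $m,n$ and using Parseval's identity $\sum_j|\hat v(j)|^2=(2\pi)^{-1}\|v\|_{L^2}^2$ yields the \emph{exact identity} $\|A\|_{\mathbf S_2}=(2\pi)^{-1/2}\|k\|_{l^2}\|v\|_{L^2}$, which is the claimed bound at $q=2$. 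For $q=\infty$, where $\|\cdot\|_{\mathbf S_\infty}$ is the operator norm, I would use that $\mathcal F$ is unitary and that $K,M_v$ are multiplication operators to get $\|A\|_{\mathbf S_\infty}\le\|K\|\,\|M_v\|=\|k\|_{l^\infty}\|v\|_{L^\infty}$, which matches the claim since $(2\pi)^{-1/\infty}=1$.

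To pass to a general $q\in[2,\infty]$ I would use complex interpolation. Fix $q$, set $\theta=1-\tfrac2q$ so that $\tfrac1q=\tfrac{1-\theta}2$, and for $w$ in the strip $0\le\Re w\le1$ introduce the \emph{analytic families}
\[ k^{(w)}_m=|k_m|^{q(1-w)/2-1}k_m,\qquad v^{(w)}(\theta)=|v(\theta)|^{q(1-w)/2-1}v(\theta), \]
with the convention that a zero factor is kept as $0$; then $k^{(\theta)}=k$ and $v^{(\theta)}=v$. On the line $\Re w=0$ one has $|k^{(w)}_m|=|k_m|^{q/2}$ and $|v^{(w)}|=|v|^{q/2}$, so $\|k^{(w)}\|_{l^2}=\|k\|_{l^q}^{q/2}$ and $\|v^{(w)}\|_{L^2}=\|v\|_{L^q}^{q/2}$, while on the line $\Re w=1$ both families are bounded by $1$ in $l^\infty$ and $L^\infty$. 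Writing $A^{(w)}=K^{(w)}\mathcal F^{-1}M_{v^{(w)}}\mathcal F$, the two endpoint estimates give, uniformly in $y$, the bounds $\|A^{(iy)}\|_{\mathbf S_2}\le M_0:=(2\pi)^{-1/2}\|k\|_{l^q}^{q/2}\|v\|_{L^q}^{q/2}$ and $\|A^{(1+iy)}\|_{\mathbf S_\infty}\le M_1:=1$. Stein's complex interpolation theorem for the Schatten ideals then yields $\|A\|_{\mathbf S_q}=\|A^{(\theta)}\|_{\mathbf S_q}\le M_0^{1-\theta}M_1^{\theta}$. Since $1-\theta=2/q$, a short bookkeeping of exponents gives $M_0^{1-\theta}=(2\pi)^{-1/q}\|k\|_{l^q}\|v\|_{L^q}$, which is exactly the asserted inequality.

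The step requiring the most care is the rigorous application of interpolation in the Schatten setting, where the map depends bilinearly on the data and the relevant ideal (rather than an $L^p$ space) is being interpolated. To make $w\mapsto A^{(w)}$ a genuine admissible analytic family and to guarantee that the associated scalar functions $w\mapsto\tr(A^{(w)}B)$ used in the three-lines argument are bounded and holomorphic on the strip, I would first reduce, by a density and monotonicity argument, to finitely supported $k$ and trigonometric-polynomial $v$, so that every $A^{(w)}$ is finite rank; the general case then follows by passing to the limit. An alternative, avoiding the hand-built family, is to invoke Calder\'on's bilinear complex interpolation theorem together with the identifications $[\mathbf S_2,\mathbf S_\infty]_\theta=\mathbf S_q$, $[l^2,l^\infty]_\theta=l^q$ and $[L^2,L^\infty]_\theta=L^q$, applied to the bounded bilinear map $(k,v)\mapsto K\mathcal F^{-1}M_v\mathcal F$; this delivers the constant $M_0^{1-\theta}M_1^{\theta}$ in one stroke. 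Finally, I note that the hypothesis $q\ge2$ is essential: it is precisely the range in which the Hilbert--Schmidt endpoint is available to anchor the interpolation, and an estimate of this form is known to fail for $q<2$.
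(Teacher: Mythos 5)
Your proof is correct and follows essentially the same route as the paper: the exact Hilbert--Schmidt computation at $q=2$, the operator-norm bound at $q=\infty$, and complex interpolation between these endpoints. The only difference is that you spell out the interpolation step (Stein analytic families with the reduction to finite-rank approximants, or Calder\'on's bilinear theorem), whereas the paper delegates exactly this step to the proof of Theorem 4.1 in Simon's \emph{Trace Ideals}.
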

For operators on $L^2(\mr^d)$, this is a well-known result, we refer to Theorem 4.1 in Simon \cite{simonb}. Since the proofs in the discrete and continuous settings are completely analogous, we only provide a sketch.
\begin{proof}[Sketch of proof of Lemma \ref{lem:simon}]
We note that $K \mathcal{F}^{-1} M_v \mathcal{F}$ is an integral operator on $l^2(\mz)$ with kernel $(2\pi)^{-\frac 1 2}k_m (\mathcal{F}^{-1}v)_{m-n}$ where $m, n \in \mz$. We thus obtain for the Hilbert-Schmidt norm
\begin{eqnarray*}
\| K \mathcal{F}^{-1} M_v \mathcal{F} \|_{\mathbf{S}_2}^2 &=& (2\pi)^{-1}\sum_{m,n} |k_m (\mathcal{F}^{-1}v)_{m-n}|^2 \\
&=& (2\pi)^{-1} \|k\|_{l^2}^2 \|\mathcal{F}^{-1}v\|_{l^2}^2 = (2\pi)^{-1} \|k\|_{l^2}^2 \|v\|_{L^2}^2.
\end{eqnarray*}
Clearly, for the operator norm we have
$$ \| K \mathcal{F}^{-1} M_v \mathcal{F} \| \leq \|K\| \|\mathcal{F}^{-1} M_v \mathcal{F} \| = \|K\| \|M_v\|
 = \|k\|_{l^\infty} \|v\|_{L^\infty}.$$
The general result now follows by complex interpolation. For details, see the proof of Theorem 4.1 in \cite{simonb}.
\end{proof}
We return to the proof of Lemma \ref{lem:est_g}. With $d^{\frac 1 2}=\{ d_k^{\frac 1 2}\}$  the previous Lemma and estimate (\ref{R1}) imply
\begin{eqnarray*}
  \| G(\lambda)\|_{\mathbf{S}_p}^p \leq (2\pi)^{-1} \| d^{\frac 1 2}\|_{l^{2p}}^{2p} \| |v_\lambda|^{\frac 1 2} \|_{L^{2p}}^{2p}
  = (2\pi)^{-1} \| d \|_{l^p}^p \| v_\lambda \|_{L^p}^p.
\end{eqnarray*}
The proof of Lemma \ref{lem:est_g} is completed by an application of the following result.
\begin{lemma}\label{lem:resolv1}
  Let $\lambda \in \mc \setminus [-2,2]$ and let $v_\lambda: [0,2\pi) \to \mc$ be defined by (\ref{Q4}). Then for $p>1$,
  \begin{equation}\label{eq:resolv1}
    \| v_\lambda \|_{L^p}^p \leq \frac{C(p)}{\dist(\lambda,[-2,2])^{p-1}|\lambda^2-4|^{\frac 1 2 }}.
  \end{equation}
Furthermore, for every $0 < \eps < 1$,
\begin{equation}\label{eq:resolv2}
     \| v_\lambda \|_{L^1} \leq \frac{C(\eps)}{\dist(\lambda,[-2,2])^{\eps}|\lambda^2-4|^{\frac {(1-\eps)} 2 }}.
\end{equation}
\end{lemma}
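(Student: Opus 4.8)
The plan is to convert the $L^p$-norm into an explicit real integral and then estimate that integral by localising its integrand around the point of $[-2,2]$ nearest to $\lambda$ and around the two endpoints $\pm 2$. First I would use that $\theta\mapsto 2\cos\theta$ maps $[0,\pi]$ onto $[-2,2]$ with $d\theta = -ds/\sqrt{4-s^2}$, while $\int_0^{2\pi}=2\int_0^\pi$ by the evenness of the cosine. With $v_\lambda$ as in (\ref{Q4}) this gives
\[ \|v_\lambda\|_{L^p}^p = \int_0^{2\pi}\frac{d\theta}{|\lambda-2\cos\theta|^p} = 2\int_{-2}^{2}\frac{ds}{|\lambda-s|^p\,\sqrt{(2-s)(2+s)}}, \]
so that everything reduces to estimating this weighted integral, in which $|\lambda-s|$ is the distance from $\lambda$ to the real point $s$.

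Writing $M=|\lambda-2|$ and $N=|\lambda+2|$, so that $|\lambda^2-4|^{1/2}=M^{1/2}N^{1/2}$, I would rely on two elementary facts: $|\lambda-s|\geq\dist(\lambda,[-2,2])$ for every $s\in[-2,2]$, and $|\lambda-s|$ attains its minimum over $[-2,2]$ at the projection $s_\ast$ of $\Re\lambda$ onto $[-2,2]$ (interior when $|\Re\lambda|<2$, an endpoint otherwise). The integrand therefore has at most three relevant features: a peak at $s_\ast$, where $|\lambda-s|$ is comparable to $\dist(\lambda,[-2,2])$, and the two integrable square-root singularities of the weight at $s=\pm 2$. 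I would split $[-2,2]$ into a neighbourhood of $s_\ast$ and neighbourhoods of the endpoints and estimate each piece by a rescaling. Near an endpoint, writing $s=2-t$, the factor $(2-s)^{-1/2}=t^{-1/2}$ is exactly what, after rescaling $t$ by $M$, turns $\int (M^2+t^2)^{-p/2}t^{-1/2}\,dt$ into $C(p)\,M^{1/2-p}$ (the model integral $\int_0^\infty (1+u^2)^{-p/2}u^{-1/2}\,du$ converging already for $p>\frac12$); this is the source of the $M^{-1/2}$ factor, and symmetrically of $N^{-1/2}$ at $s=-2$. In the bulk around $s_\ast$ the weight is essentially constant, and rescaling $s-s_\ast$ by $\dist(\lambda,[-2,2])$ produces the $\dist(\lambda,[-2,2])^{-(p-1)}$ factor, the convergence of the model integral $\int_{-\infty}^{\infty}(1+v^2)^{-p/2}\,dv$ being where the hypothesis $p>1$ is genuinely used. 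Collecting the pieces yields (\ref{eq:resolv1}).

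For the $L^1$ inequality (\ref{eq:resolv2}) the bulk integral $\int|\lambda-s|^{-1}\,ds$ only diverges logarithmically, so I cannot extract the full power of $\dist(\lambda,[-2,2])$. Instead I would write $|\lambda-s|^{-1}\leq\dist(\lambda,[-2,2])^{-\eps}\,|\lambda-s|^{-(1-\eps)}$ and rerun the previous step with $p$ replaced by $1-\eps<1$: now the bulk integral converges, the endpoint singularities supply $(MN)^{-(1-\eps)/2}=|\lambda^2-4|^{-(1-\eps)/2}$, and one obtains (\ref{eq:resolv2}).

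The hard part will be making the localisation uniform in the position of $\lambda$. The delicate configuration is when $s_\ast$ lies within distance $\mathcal{O}(M)$ (or $\mathcal{O}(N)$) of an endpoint, so that the peak and a square-root singularity overlap: there the weight varies substantially across the peak and must be integrated rather than evaluated at $s_\ast$, and one must verify that the two endpoint contributions assemble into the \emph{product} $M^{-1/2}N^{-1/2}$ and not merely a sum, and that the bound degrades correctly, like $|\lambda|^{-p}$, as $|\lambda|\to\infty$ when no interior peak is present.
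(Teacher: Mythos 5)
Your plan is correct, and for the main estimate (\ref{eq:resolv1}) it is essentially the paper's proof: the same change of variables to the weighted integral $2\int_{-2}^2 |\lambda-s|^{-p}(4-s^2)^{-1/2}\,ds$, the same rescalings, and the same two model integrals ($\int_0^\infty (1+u^2)^{-p/2}u^{-1/2}\,du$, convergent for $p>\tfrac12$, at the endpoints; $\int_0^\infty(1+u^2)^{-p/2}\,du$, convergent for $p>1$, in the bulk). The "hard part" you flag is real, and the paper dispatches it in two organisational steps worth knowing. First, a symmetry reduction: both sides of (\ref{eq:resolv1}) are invariant under $\lambda\mapsto\bar\lambda$ and $\lambda\mapsto-\lambda$, so one may take $\Re\lambda\geq0$, $\Im\lambda\geq0$; then $|\lambda+x|\geq|\lambda-x|$ on $[0,2]$ lets one discard the singularity at $s=-2$ and work with the single weight $(2-x)^{-1/2}$, which largely dissolves your "sum versus product" worry --- the factor $|\lambda+2|^{1/2}$ is reinstated at the end, being either bounded or comparable to $|\lambda-2|^{1/2}$ depending on whether $|\lambda|$ is small or large. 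Second, the overlap configuration (peak near the endpoint, i.e. $\Re\lambda\in[0,2)$, $0<\Im\lambda\leq1$) is handled not by a single localisation but by proving \emph{two} alternative bounds for each piece of the integral, one in terms of $(2-\Re\lambda)^{-1/2}$ and one in terms of powers of $\Im\lambda$, and combining them via $|\lambda^2-4|^{-1/2}\geq C\min\bigl((2-\Re\lambda)^{-1/2},(\Im\lambda)^{-1/2}\bigr)$. The genuine divergence is your treatment of (\ref{eq:resolv2}): the paper never reruns any integral estimate for sub-unit exponents; it simply applies H\"older's inequality, $\|v_\lambda\|_{L^1}\leq\|v_\lambda\|_{L^r}\,\|1\|_{L^{r/(r-1)}}$ with $r=1/(1-\eps)$, and quotes (\ref{eq:resolv1}) with $p=r$, so (\ref{eq:resolv2}) comes for free. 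Your route --- peeling off $\dist(\lambda,[-2,2])^{-\eps}$ pointwise and re-estimating the integral with exponent $1-\eps<1$ --- does work (the bulk then converges with no power of the distance, and the endpoint analysis goes through), but it obliges you to prove a new uniform bound not covered by (\ref{eq:resolv1}), with its own endpoint bookkeeping; the H\"older trick is the shorter and safer path.
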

\begin{proof}[Proof of Lemma \ref{lem:resolv1}]
Let us first show that  (\ref{eq:resolv2}) is an immediate consequence of (\ref{eq:resolv1}): for $r>1$ H\"older's inequality and (\ref{eq:resolv1}) imply (remember that $L^2=L^2(0,2\pi)$)
 \begin{eqnarray*}
   \|v_\lambda\|_{L^1} &=& \|v_\lambda \cdot 1\|_{L^1} \leq \|v_\lambda\|_{L^r} \|1\|_{L^{r/(r-1)}} \\
   &\leq& \frac{C(r)}{\dist(\lambda,[-2,2])^{1-\frac 1 r}|\lambda^2-4|^{\frac 1 {2r}}}.
 \end{eqnarray*}
Choosing $r=\frac{1}{1-\eps}$, where $0 < \eps < 1$, implies the validity of (\ref{eq:resolv2}). It remains to show (\ref{eq:resolv1}):

Let $ p > 1$ and $\lambda \in \mc \setminus [-2,2]$. Substituting $x=2\cos(\theta)$ we see that
\begin{eqnarray*}
\|v_\lambda\|_{L^p}^p &=& \int_0^{\pi}\frac{d\theta}{|\lambda-2\cos(\theta)|^p} + \int_\pi^{2\pi}\frac{d\theta}{|\lambda-2\cos(\theta)|^p}  \\
&=&2\int_{-2}^2 \frac{dx}{|\lambda-x|^p(4-x^2)^{\frac{1}{2}}} .
\end{eqnarray*}
Simplifying further we obtain
\begin{eqnarray}
\| v_{\lambda}\|_{L^p}^p &=&2 \left( \int_{0}^2 \frac{dx}{|\lambda-x|^p(4-x^2)^{\frac{1}{2}}} + \int_{0}^2 \frac{dx}{|\lambda+x|^p(4-x^2)^{\frac{1}{2}}} \right). \label{iE}
\end{eqnarray}
The last equality shows that the $L^p$-norm of $v_\lambda$ is
invariant under reflection of $\lambda$ with respect to the real and
the imaginary axes, respectively. Since the same is true for the
right-hand side of (\ref{eq:resolv1}), we can restrict ourselves to
the case that $\Re(\lambda)$ and $\Im(\lambda)$ are both
nonnegative. In this case, using that for $x \in [0,2]$ we have $
|\lambda + x| \geq |\lambda - x|$, we can deduce from (\ref{iE})
that
\begin{eqnarray*}
 \| v_\lambda \|_{L^p}^p \leq 4 \int_{0}^2 \frac{dx}{|\lambda-x|^p(4-x^2)^{\frac{1}{2}}}
\leq  2\sqrt{2} \int_{0}^2 \frac{dx}{|\lambda-x|^p(2-x)^{\frac{1}{2}}}. 
\end{eqnarray*}
In the following we set $\lambda_0=\Re(\lambda)$ and $\lambda_1=\Im(\lambda)$. The previous considerations imply that it is sufficient to show that for $\lambda_0, \lambda_1 \geq 0$:
\begin{equation}\label{eq:gg}
\int_{0}^2 \frac{dx}{|\lambda-x|^p(2-x)^{\frac{1}{2}}} \leq \frac{C(p)}{\dist(\lambda,[-2,2])^{p-1}|\lambda^2-4|^{\frac 1 2}}.
\end{equation}
We now consider seperately the cases a) $\lambda_0\geq 2$ and
b) $\lambda_0\in [0,2)$.

a.1) If $\lambda_0 \geq 2$ and $|\lambda|\geq 3$ then
\begin{eqnarray}\label{e0}
&& \int_{0}^2 \frac{dx}{|\lambda-x|^p(2-x)^{\frac{1}{2}}}\leq \frac{1}{|\lambda -2|^p}
\int_{0}^2 \frac{dx}{(2-x)^{\frac{1}{2}}}=\frac{C}{|\lambda
-2|^p} \nonumber \\
&=& \frac{C}{\dist(\lambda,[-2,2])^{p-1}|\lambda-2|}=
\frac{C }{\dist(\lambda,[-2,2])^{p-1}|\lambda^2-4|^{\frac{1}{2}}}
\frac{|\lambda+2|^{\frac{1}{2}}}{|\lambda-2|^{\frac{1}{2}}}\nonumber \\
&\leq& \frac{C}{\dist(\lambda,[-2,2])^{p-1}|\lambda^2-4|^{\frac{1}{2}}}.
\end{eqnarray}
a.2) If $\lambda_0 \geq 2$ and $|\lambda| < 3$ then
\begin{eqnarray}\label{e1}&&
\int_{0}^2 \frac{dx}{|\lambda-x|^p(2-x)^{\frac{1}{2}}}
=\int_{0}^2 \frac{dx}{((x-\lambda_0)^2+\lambda_1^2)^{\frac{p}{2}}(2-x)^{\frac{1}{2}}}\nonumber\\
&=& \int_{0}^2 \frac{dx}{((\lambda_0-2+x)^2+\lambda_1^2)^{\frac{p}{2}}x^{\frac{1}{2}}}
\leq \int_{0}^2 \frac{dx}{(x^2+(\lambda_0-2)^2+\lambda_1^2)^{\frac{p}{2}}x^{\frac{1}{2}}} \nonumber \\
&=& \int_{0}^2 \frac{dx}{(x^2+|\lambda-2|^2)^{\frac{p}{2}}x^{\frac{1}{2}}}
=\frac{1}{|\lambda-2|^{p-\frac{1}{2}}}\int_{0}^{\frac{2}{|\lambda-2|}}
\frac{du}{(u^2+1)^{\frac{p}{2}}u^{\frac{1}{2}}} \nonumber \\
&\leq& \frac{1}{|\lambda-2|^{p-\frac{1}{2}}}\int_{0}^{\infty}
\frac{du}{(u^2+1)^{\frac{p}{2}}u^{\frac{1}{2}}}
= \frac{C(p)}{|\lambda-2|^{p-\frac{1}{2}}} \nonumber \\
&=& \frac{C(p)|\lambda+2|^{\frac{1}{2}}}{\dist(\lambda,[-2,2])^{p-1}|\lambda^2-4|^{\frac{1}{2}}}
\leq \frac{C(p)}{\dist(\lambda,[-2,2])^{p-1}|\lambda^2-4|^{\frac{1}{2}}}.
\end{eqnarray}
The estimates (\ref{e0}) and (\ref{e1}) show the validity of (\ref{eq:gg}) in case that $\lambda_0 \geq 2$.
We now consider the case b):

b.1) If $\lambda_0 \in [0,2)$ and $\lambda_1>1$ then
\begin{eqnarray}\label{rrs}
&&\int_{0}^2\frac{dx}{|\lambda-x|^p(2-x)^{\frac{1}{2}}}\leq
\frac{1}{\dist(\lambda,[-2,2])^p}\int_{0}^2\frac{dx}{(2-x)^{\frac{1}{2}}} \nonumber \\
&=& \frac{C}{\dist(\lambda,[-2,2])^{p-1}|\lambda^2-4|^{\frac{1}{2}}}
\frac{|\lambda^2-4|^{\frac{1}{2}}}{\dist(\lambda,[-2,2])} \nonumber \\
&\leq&
\frac{C}{\dist(\lambda,[-2,2])^{p-1}|\lambda^2-4|^{\frac{1}{2}}}.
\end{eqnarray}
The last estimate shows the validity of (\ref{eq:gg}) in case that $\lambda_0 \in [0,2), \lambda_1 >1$.

b.2) Let $\lambda_0 \in [0,2)$ and $0< \lambda_1\leq 1$. Substituting
$u=\frac{\lambda_0-x}{\lambda_1}$, we obtain
\begin{eqnarray}
&&\int_{0}^2
\frac{dx}{|\lambda-x|^p(2-x)^{\frac{1}{2}}}=\int_{0}^2
\frac{dx}{((\lambda_0-x)^2+\lambda_1^2)^{\frac{p}{2}}(2-x)^{\frac{1}{2}}}\nonumber \\
&=&\frac{1}{\lambda_1^{p-1}}\int_{\frac{\lambda_0-2}{\lambda_1}}^{\frac{\lambda_0}{\lambda_1}}
\frac{du}{(u^2+1)^{\frac{p}{2}}(2-\lambda_0+\lambda_1
u)^{\frac{1}{2}}}\nonumber \\
&=&\frac{1}{\dist(\lambda,[-2,2])^{p-1}}\int_{\frac{\lambda_0-2}{\lambda_1}}^{\frac{\lambda_0}{\lambda_1}}
\frac{du}{(u^2+1)^{\frac{p}{2}}(2-\lambda_0+\lambda_1
u)^{\frac{1}{2}}},\nonumber\end{eqnarray} so in order to prove (\ref{eq:gg}) we
need to prove
\begin{equation}\label{w2}
\int_{\frac{\lambda_0-2}{\lambda_1}}^{\frac{\lambda_0}{\lambda_1}}
\frac{du}{(u^2+1)^{\frac{p}{2}}(2-\lambda_0+\lambda_1
u)^{\frac{1}{2}}}\leq \frac{C(p)}{|\lambda^2-4|^{\frac{1}{2}}}.
\end{equation}
Let us split the last integral into two parts. We have
\begin{eqnarray}&&
\int_{0}^{\frac{\lambda_0}{\lambda_1}}
\frac{du}{(u^2+1)^{\frac{p}{2}}(2-\lambda_0+\lambda_1
u)^{\frac{1}{2}}}\leq
\frac{1}{(2-\lambda_0)^{\frac{1}{2}}}\int_{0}^{\frac{\lambda_0}{\lambda_1}}
\frac{du}{(u^2+1)^{\frac{p}{2}}} \nonumber \\
&\leq& \frac{1}{(2-\lambda_0)^{\frac{1}{2}}}\int_{0}^{\infty}
\frac{du}{(u^2+1)^{\frac{p}{2}}}=\frac{C(p)}{(2-\lambda_0)^{\frac{1}{2}}}.\label{gg}
\end{eqnarray}
The integral in (\ref{gg}) can also be estimated in a different way
\begin{eqnarray}\label{gg1}&&
\int_{0}^{\frac{\lambda_0}{\lambda_1}}
\frac{du}{(u^2+1)^{\frac{p}{2}}(2-\lambda_0+\lambda_1
u)^{\frac{1}{2}}}\leq
\frac{1}{\lambda_1^{\frac{1}{2}}}\int_{0}^{\frac{\lambda_0}{\lambda_1}}
\frac{du}{(u^2+1)^{\frac{p}{2}}u^{\frac{1}{2}}}\nonumber\\
&\leq& \frac 1 {\lambda_1^{\frac 1 2}} \int_{0}^{\infty}
\frac{du}{(u^2+1)^{\frac{p}{2}}u^{\frac{1}{2}}}
=\frac{C(p)}{\lambda_1^{\frac{1}{2}}}.
\end{eqnarray}
Since we assumed that $\lambda_0 \in [0,2)$ and $\lambda_1 \in (0,1]$, the following holds:
\begin{equation*}
|\lambda^2-4|\leq
C|\lambda-2|=C[(2-\lambda_0)^2+\lambda_1^2]^{\frac{1}{2}} \leq C
\max(2-\lambda_0,\lambda_1),
\end{equation*}
implying that
\begin{equation}\label{daa}
\frac{1}{|\lambda^2-4|^{\frac{1}{2}}}
\geq C\min
\left(\frac{1}{(2-\lambda_0)^{\frac{1}{2}}},\frac{1}{\lambda_1^{\frac{1}{2}}}\right),
\end{equation}
and therefore (\ref{gg}) and (\ref{gg1}) show that
\begin{equation}\label{WW}\int_{0}^{\frac{\lambda_0}{\lambda_1}}
\frac{du}{(u^2+1)^{\frac{p}{2}}(2-\lambda_0+\lambda_1
u)^{\frac{1}{2}}}\leq \frac{C(p)}{|\lambda^2-4|^{\frac{1}{2}}}.
\end{equation}

To prove (\ref{w2}) it remains to prove
\begin{equation}\label{w3}
\int_{\frac{\lambda_0-2}{\lambda_1}}^{0}
\frac{du}{(u^2+1)^{\frac{p}{2}}(2-\lambda_0+\lambda_1
u)^{\frac{1}{2}}}\leq \frac{C(p)}{|\lambda^2-4|^{\frac{1}{2}}}.
\end{equation}

Making the change of variable $w=\frac{\lambda_1}{\lambda_0-2} u$,
we have
\begin{small}
\begin{eqnarray}\label{uu}
\int_{\frac{\lambda_0-2}{\lambda_1}}^{0}
\frac{du}{(u^2+1)^{\frac{p}{2}}(2-\lambda_0+\lambda_1
u)^{\frac{1}{2}}}=\frac{(2-\lambda_0)^{\frac{1}{2}}}{\lambda_1}
\int_0^{1}
\frac{dw}{((\frac{2-\lambda_0}{\lambda_1})^2w^2+1)^{\frac{p}{2}}(1-w)^{\frac{1}{2}}}.
\end{eqnarray}
\end{small}
To estimate the last integral we note that for any $c>0$,
\begin{eqnarray}\label{ur} && \int_0^{1}
\frac{dw}{(c^2w^2+1)^{\frac{p}{2}}(1-w)^{\frac{1}{2}}} \nonumber\\
&\leq &\sqrt{2}\int_0^{\frac{1}{2}}
\frac{dw}{(c^2w^2+1)^{\frac{p}{2}}}+\frac{2^p}{
(c^2+4)^{\frac{p}{2}}}\int_{\frac{1}{2}}^{1}
\frac{dw}{(1-w)^{\frac{1}{2}}}\nonumber\\
&=&\frac{\sqrt{2}}{c}\int_0^{\frac{c}{2}}
\frac{dv}{(v^2+1)^{\frac{p}{2}}}+\frac{C(p)}{(c^2+4)^\frac{p}{2}}
\leq \frac{\sqrt{2}}{c}\int_0^{\infty}
\frac{dv}{(v^2+1)^{\frac{p}{2}}}+\frac{C(p)}{(c^2+4)^\frac{p}{2}}\nonumber \\
&\leq& C(p)\left( \frac 1 c + \frac{1}{(c^2+4)^{\frac 1 2}} \right) \leq \frac{C(p)}{c}.
\end{eqnarray}

An alternative estimate leads to \begin{equation}\label{qr2}\int_0^{1}
\frac{dw}{(c^2w^2+1)^{\frac{p}{2}}(1-w)^{\frac{1}{2}}}\leq
\int_0^{1} \frac{dw}{(1-w)^{\frac{1}{2}}}=C.\end{equation}

Choosing $c= \frac{2-\lambda_0}{\lambda_1}$ we obtain from (\ref{uu}) and (\ref{ur}) that
\begin{eqnarray}\label{rew0}
\int_{\frac{\lambda_0-2}{\lambda_1}}^{0}
\frac{du}{(u^2+1)^{\frac{p}{2}}(2-\lambda_0+\lambda_1
u)^{\frac{1}{2}}}\leq \frac{C(p)}{(2-\lambda_0)^{\frac{1}{2}}}.
\end{eqnarray}
Similarly, from (\ref{uu}) and (\ref{qr2}) we have
\begin{eqnarray}\label{rew}
\int_{\frac{\lambda_0-2}{\lambda_1}}^{0}
\frac{du}{(u^2+1)^{\frac{p}{2}}(2-\lambda_0+\lambda_1
u)^{\frac{1}{2}}}\leq C(p)
\frac{(2-\lambda_0)^{\frac{1}{2}}}{\lambda_1}.
\end{eqnarray}
Distinguishing between the cases $\lambda_1\leq 2-\lambda_0$ and
$\lambda_1
> 2 - \lambda_0$, respectively, it is easy to check that
\begin{eqnarray*}
\min \left( \frac{1}{(2-\lambda_0)^{\frac 1 2 }}, \frac{(2-\lambda_0)^{\frac 1 2}}{\lambda_1} \right)
 \leq \min \left(\frac{1}{(2-\lambda_0)^{\frac{1}{2}}},\frac{1}{\lambda_1^{\frac{1}{2}}}\right),
\end{eqnarray*}
so (\ref{rew0}), (\ref{rew}) and (\ref{daa}) show the validity of (\ref{w3}). Finally, noting that (\ref{WW}) and (\ref{w3}) provide the proof of estimate (\ref{w2}), the proof of Lemma \ref{lem:resolv1} is completed.
\end{proof}
\section{Proof of Theorem \ref{thm4}}\label{final}

Let $p \geq \frac 3 2$ and $\tau \in (0,1)$. From (\ref{gk1}) we know that for $\theta \in [0, \frac \pi 2)$
\begin{eqnarray}\label{R0}
\sum_{\lambda\in \sigma_{d}(J) \cap \Omega_\theta^+} |\lambda-2|^{p-\frac 1 2} \leq  C(p)(1+2\tan(\theta))^{p} \|d\|_{l^{p}}^{p},
\end{eqnarray}
where $\Omega_\theta^{+}= \{ \lambda : 2 - \Re(\lambda) < \tan(\theta) |\Im \lambda| \}$. We define
\begin{equation*}
  \Psi_1 = \{ \lambda: \Re(\lambda) >0,\; 2- \Re(\lambda)< |\Im(\lambda)| \} \subset \Omega_{\pi /4}^+.
\end{equation*}
An easy calculation shows that for $\lambda \in \Psi_1$ we have
\begin{equation}
  |\lambda-2|^{p-\frac 1 2} \geq C(\tau) \frac{ \dist(\lambda,[-2,2])^{p +\tau}}{|\lambda^2-4|^{\frac 1 2 + \tau}},
\end{equation}
so (\ref{R0}) implies that
\begin{equation}\label{R11}
  \sum_{\lambda\in \sigma_{d}(J) \cap \Psi_1}  \frac{ \dist(\lambda,[-2,2])^{p +\tau}}{|\lambda^2-4|^{\frac 1 2 + \tau}} \leq C(p,\tau) \|d\|_{l^{p}}^{p}.
\end{equation}
Let $ \Psi_2 = \{ \lambda: \Re(\lambda)>0\} \setminus \Psi_1$ and set $x=\tan(\theta) \in [0,\infty)$. From (\ref{R0}) we obtain
\begin{equation}\label{R2}
  \sum_{\lambda\in \sigma_{d}(J)\cap \Psi_2, \: \frac{2-\Re(\lambda)  }{|\Im \lambda|} < x} |\lambda-2|^{p-\frac 1 2} \leq C(p)(1+2x)^{p} \|d\|_{l^{p}}^{p}.
\end{equation}
We multiply both sides of (\ref{R2}) with $x^{-p-1-\tau}$ and integrate with respect to $x \in [1,\infty)$. For the left-hand side we obtain
\begin{eqnarray*}
&&\int_1^\infty dx \; x^{-p-1-\tau}  \sum_{\lambda\in \sigma_{d}(J)\cap \Psi_2, \: \frac{2-\Re(\lambda)  }{|\Im \lambda|} < x} |\lambda-2|^{p-\frac 1 2} \\
&=& \sum_{\lambda \in \sigma_d(J) \cap \Psi_2} |\lambda-2|^{p-\frac 1 2} \int_{\max(1, \frac{2-\Re(\lambda)  }{|\Im \lambda|})}^\infty dx \; x^{-p-1-\tau} \\
&=& C(p,\tau) \sum_{\lambda \in \sigma_d(J) \cap \Psi_2} |\lambda-2|^{p-\frac 1 2} \left( \frac{|\Im \lambda|}{2-\Re(\lambda)  } \right)^{p+\tau} \\
&=& C(p,\tau) \sum_{\lambda \in \sigma_d(J) \cap \Psi_2} |\lambda-2|^{p-\frac 1 2} \left( \frac{\dist(\lambda,[-2,2])}{2-\Re(\lambda)  } \right)^{p+\tau} \\
&\geq& C(p,\tau) \sum_{\lambda \in \sigma_d(J) \cap \Psi_2}   \frac{\dist(\lambda,[-2,2])^{p+\tau}}{|\lambda^2-4|^{\frac 1 2 +\tau}  }.
\end{eqnarray*}
Similarly, for the right-hand side of (\ref{R2}) we obtain,
\begin{equation*}
 C(p) \|d\|_{l^{p}}^{p}  \int_1^\infty dx \; x^{-p-1-\tau} (1+2x)^{p} \leq C(p,\tau) \| d \|_{l^p}^p.
\end{equation*}
We have thus shown that
\begin{equation}\label{R3}
  \sum_{\lambda \in \sigma_d(J) \cap \Psi_2}   \frac{\dist(\lambda,[-2,2])^{p+\tau}}{|\lambda^2-4|^{\frac 1 2 +\tau}  } \leq C(p,\tau) \| d \|_{l^p}^p.
\end{equation}
Noting that $\Psi_1$ and $\Psi_2$ are disjoint with $\Psi_1 \cup
\Psi_2 = \{ \lambda : \Re(\lambda) > 0\}$  we conclude from
(\ref{R11}) and (\ref{R3}) that
\begin{equation*}
  \sum_{\lambda \in \sigma_d(J), \Re(\lambda) > 0}   \frac{\dist(\lambda,[-2,2])^{p+\tau}}{|\lambda^2-4|^{\frac 1 2 +\tau}  } \leq C(p,\tau) \| d \|_{l^p}^p.
\end{equation*}
Finally, starting with the estimate
\begin{eqnarray*}
\sum_{\lambda\in \sigma_{d}(J) \cap \Omega_\theta^-} |\lambda+2|^{p-\frac 1 2} \leq  C(p)(1+2\tan(\theta))^{p} \|d\|_{l^{p}}^{p},
\end{eqnarray*}
which follows from (\ref{gk1}), we can show in exactly the same manner as above that
\begin{equation*}
  \sum_{\lambda \in \sigma_d(J), \Re(\lambda) \leq 0}   \frac{\dist(\lambda,[-2,2])^{p+\tau}}{|\lambda^2-4|^{\frac 1 2 +\tau}  } \leq C(p,\tau) \| d \|_{l^p}^p.
\end{equation*}
This concludes the proof of Theorem \ref{thm4}.

\section*{Acknowledgement}
It's a pleasure to thank Michael Demuth for many valuable
discussions.

\end{document}